\documentclass[12pt,oneside]{amsart}
\usepackage{amssymb}
\usepackage{verbatim}
\usepackage{xcolor}
\usepackage{graphicx}

\usepackage[top=1in, left=1.1in, right=1.1in, bottom=1in]{geometry}

\newtheorem{theorem}{Theorem}[section]
\newtheorem{proposition}[theorem]{Proposition}
\newtheorem{lemma}[theorem]{Lemma}
\newtheorem{corollary}[theorem]{Corollary}
\newtheorem{question}[theorem]{Question}

\newtheorem*{GPR}{The Generalized Property R Conjecture}
\newtheorem*{wGPR}{The Weak Generalized Property R Conjecture}
\newtheorem*{sGPR}{The Stable Generalized Property R Conjecture}

\newtheorem{conjecture}[theorem]{Conjecture}

\theoremstyle{definition}
\newtheorem{definition}[theorem]{Definition}

\makeatletter
\newtheorem*{rep@theorem}{\rep@title}
\newcommand{\newreptheorem}[2]{%
\newenvironment{rep#1}[1]{%
 \def\rep@title{#2 \ref{##1}}%
 \begin{rep@theorem}}%
 {\end{rep@theorem}}}
\makeatother

\newreptheorem{theorem}{Theorem}
\newreptheorem{corollary}{Corollary}

\theoremstyle{remark}
\newtheorem{remark}[theorem]{Remark}

\numberwithin{equation}{section}

\newcommand{\Ss}{\mathcal{S}}
\newcommand{\T}{\mathcal{T}}
\newcommand{\Hh}{\mathcal{H}}
\newcommand{\Ll}{\mathcal{L}}
\newcommand{\D}{\mathcal{D}}

\newcommand{\Z}{\mathbb{Z}}

\newcommand{\CP}{\mathbb{CP}}

%%%%%%%%%%%%%%%%%%%%%%%%%%%%%%%%%%%%%%%%%%%%%%%%%%%%%%%%%%%%%%%%%%%%%%%%%%%%
%%%%%%%%%%%%%%%%%%%%%%%%%%%%%%%%%%%%%%%%%%%%%%%%%%%%%%%%%%%%%%%%%%%%%%%%%%%%
\begin{document}

%\title{The classification of some unbalanced trisections}
%\title{Dehn surgery on links and classifications of trisections}
\title{Classification of trisections and the Generalized Property R Conjecture}

\author{Jeffrey Meier}
\address{Department of Mathematics, Indiana University, 
Bloomington, IN 47408}
\email{jlmeier@indiana.edu}
\urladdr{http://pages.iu.edu/~jlmeier/} 

\author{Trent Schirmer}
\address{Department of Mathematics, Oklahoma State University, 
Stillwater, OK 74078}
\email{trent.schirmer@okstate.edu}
\urladdr{www.trentschirmer.com} 

\author{Alexander Zupan}
\address{Department of Mathematics, University of Texas at Austin, Austin, TX 78712}
\email{zupan@math.utexas.edu}
\urladdr{http://math.utexas.edu/users/zupan} 

\begin{abstract}
We show that the members of a large class of unbalanced four-manifold trisections are standard, and we present a family of trisections that is likely to include non-standard trisections of the four-sphere. As an application, we prove a stable version of the Generalized Property R Conjecture for $c$--component links with tunnel number at most $c$.
\end{abstract}

\maketitle

%%%%%%%%%%%%%%%%%%%%%%%%%%%%%%%%%%%%%%%%%%%%%%%%%%%%%%%%%%%%%%%%%%%%%%%%%%%%
%%%%%%%%%%%%%%%%%%%%%%%%%%%%%%%%%%%%%%%%%%%%%%%%%%%%%%%%%%%%%%%%%%%%%%%%%%%%
\section{Introduction}\label{sec:intro}
%%%%%%%%%%%%%%%%%%%%%%%%%%%%%%%%%%%%%%%%%%%%%%%%%%%%%%%%%%%%%%%%%%%%%%%%%%%%
%%%%%%%%%%%%%%%%%%%%%%%%%%%%%%%%%%%%%%%%%%%%%%%%%%%%%%%%%%%%%%%%%%%%%%%%%%%%

A trisection is a decomposition of a four-manifold into three standard pieces.  The theory of trisections, due to Gay and Kirby \cite{gay-kirby:trisections}, provides a four-dimensional analogue to the theory of Heegaard splittings for three-manifolds and promises a new bridge between well-established techniques and results from three-manifold theory and the less well-understood realm of four-manifolds.  

Recall that an \emph{$n$--dimensional 1--handlebody of genus $g$} is a space diffeomorphic to $\natural^g(S^1\times D^n)$. The following definition of a trisection is slightly more general than the original one appearing in \cite{gay-kirby:trisections}.

\begin{definition}\label{def:tri}
Let $X$ be a closed, connected, orientable, smooth four-manifold. A \emph{$(g;k_1,k_2,k_3)$--trisection} of $X$ is a quadruple $(\Sigma,X_1,X_2,X_3)$ satisfying the following conditions:
\begin{itemize}
	\item $X=X_1\cup X_2\cup X_3$;
	\item $X_i$ is four-dimensional 1--handlebody of genus $k_i$ for $i\in\{1,2,3\}$;
	\item $X_i\cap X_j$ is a three-dimensional handlebody of genus $g$ for $i\neq j$; and
	\item $\Sigma=X_1\cap X_2\cap X_3$ is a closed, orientable surface of genus $g$.
\end{itemize}
The \emph{genus} of the trisection is the genus $g$ of $\Sigma$.
\end{definition}

The original definition in \cite{gay-kirby:trisections} requires that $k_1=k_2=k_3$; here, we will relax this condition. If $k_1=k_2=k_3$, we call the trisection \emph{balanced}; otherwise, it is \emph{unbalanced}.

The simplest example of a trisection is the genus zero trisection of $S^4$, which is a decomposition of $S^4$ into three four-balls which are glued pairwise along three-ball halves of their boundaries.  This is the unique trisection of genus zero, and there are precisely three balanced trisections of genus one.  These trisections correspond to the manifolds $\CP^2$, $\overline\CP^2$, and $S^1\times S^3$, and their diagrams are shown in Figure \ref{fig:GenusOneBal} below.  However, as we will see, there are three more \emph{unbalanced} trisection of genus one, which correspond to the three stabilizations of the genus zero trisection of $S^4$.

In addition to deducing an existence result for trisections, Gay and Kirby define a stabilization operation and prove that any two trisections of a fixed four-manifold have a common stabilization.  Their stabilization operation is effectively the combination of three smaller stabilization operations, arranged to preserve balance.  Since we are interested in unbalanced trisections, we will restrict our attention to the three component operations instead of the balanced operation. (See Section 3.)

The main result of this paper is that a large class of unbalanced trisections is trivial in the sense that each member can be expressed as the connected sum of (unbalanced) genus one trisections.  In particular, they are stabilizations of connected sums of balanced genus one trisections.

\begin{theorem}\label{thm:class}
	Suppose that $X$ admits a $(g;k_1,k_2,k_3)$--trisection $\T$ with $k_1\geq g-1$, and let $k' = \max\{k_2,k_3\}$.  Then, $X$ is diffeomorphic either to $\#^{k'}(S^1\times S^3)$ or to the connected sum of $\#^{k'}S^1\times S^3$ with one of $\CP^2$ or $\overline\CP^2$, and $\T$ is the connected sum of genus one trisections.
\end{theorem}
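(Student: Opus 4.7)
The plan is to exploit that the induced Heegaard splitting $(\Sigma;H_{12},H_{13})$ of $\partial X_1\cong\#^{k_1}(S^1\times S^2)$ has genus barely exceeding the 3-manifold's Heegaard genus, then invoke Waldhausen's uniqueness theorem to transfer the resulting standard structure into a connect-sum decomposition of the trisection itself. I would proceed by induction on $g$, splitting off genus-one summands one at a time.

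For the setup, since $\#^{k_1}(S^1\times S^2)$ has Heegaard genus exactly $k_1$, we automatically have $k_1\leq g$, and the hypothesis forces $g\in\{k_1,k_1+1\}$. By Waldhausen, $(\Sigma;H_{12},H_{13})$ is either the canonical minimum-genus splitting or its unique stabilization, so I can choose cut systems $\alpha$ for $H_{12}$ and $\beta$ for $H_{13}$ with $\alpha_i=\beta_i$ on $\Sigma$ for $i\leq k_1$ and (if $g=k_1+1$) $\alpha_g,\beta_g$ meeting once transversely. Each parallel pair $\alpha_i=\beta_i$ bounds a reducing 2-sphere $S_i\subset\partial X_1$; moreover, because $X_1\cong\natural^{k_1}(S^1\times D^3)$, the sphere $S_i$ caps off to a properly embedded 3-disk in $X_1$ coming from one of the $S^1\times D^3$ summands.

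The crux is to upgrade $S_i$ to a reducing sphere for the entire trisection, which means finding, after handleslides on the $\gamma$-curves, a compressing disk in $H_{23}$ bounded by $\alpha_i=\beta_i$. Using that $(\Sigma;\alpha,\gamma)$ and $(\Sigma;\beta,\gamma)$ are standard Heegaard diagrams for $\partial X_3\cong\#^{k_3}(S^1\times S^2)$ and $\partial X_2\cong\#^{k_2}(S^1\times S^2)$, I would argue via a handleslide/sweepout analysis that either (i) some $\gamma_j$ can be brought parallel to $\alpha_i$, producing a trisection reducing structure that splits off a genus-one summand diffeomorphic to $S^1\times S^3$ or to $S^4$; or (ii) the obstruction takes the form of a $\CP^2$ or $\overline{\CP^2}$ summand. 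This step is the main obstacle: standardizing $(\alpha,\beta)$ consumes the ambient isotopy freedom on $\Sigma$, so one must manage $\gamma$ through handleslides alone while simultaneously respecting both Heegaard structures $(\alpha,\gamma)$ and $(\beta,\gamma)$, a delicate bookkeeping task with no direct Heegaard-theoretic analogue.

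Once a summand is split off, the residual $(g-1;k_1',k_2',k_3')$-trisection still satisfies $k_1'\geq(g-1)-1$, so the inductive hypothesis applies. Each summand is one of: an unbalanced stabilization of $S^4$, a $(1;1,1,1)$-trisection of $S^1\times S^3$, or a balanced $(1;0,0,0)$-trisection of $\CP^2$ or $\overline{\CP^2}$. Because the reducing spheres always arise from the $\alpha=\beta$ side, only ``$k_1$-type'' $S^4$ stabilizations appear; carrying $k_1\geq g-1$ through the induction then forces at most one $\CP^2/\overline{\CP^2}$ piece to occur, yielding $X\cong\#^{k'}(S^1\times S^3)$ possibly summed with a single $\CP^2$ or $\overline{\CP^2}$, as claimed.
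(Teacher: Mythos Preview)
Your approach has a genuine gap at exactly the point you flag as ``the crux.'' You correctly standardize the pair $(\alpha,\beta)$ using Waldhausen, but then you need to show that some $\alpha_i=\beta_i$ also bounds in $H_\gamma$ (or that a $\CP^2$ summand obstructs this). You describe this as a ``handleslide/sweepout analysis'' and acknowledge it is ``a delicate bookkeeping task with no direct Heegaard-theoretic analogue''---but you never actually supply an argument. This is not a technical detail to be filled in later; it is the entire content of the theorem. There is no known direct way to force a curve bounding in $H_\alpha$ and $H_\beta$ to also bound in $H_\gamma$ using only the standardness of the other two pairs, and your final paragraph's claim that ``only $k_1$-type $S^4$ stabilizations appear'' is likewise unjustified.

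The paper's proof sidesteps this obstacle entirely by a change of perspective you are missing: it \emph{permutes} the pieces so that the large parameter $g-1$ sits in the \emph{middle} slot, i.e.\ it treats $\T$ as a $(g;k_2,g-1,k_3)$--trisection. The point is that now a complete collection of primitive disks for $H_\gamma$ with respect to $H_\beta$ consists of a \emph{single} disk, so the associated Heegaard--Kirby diagram $(L,\Sigma)$ has $L$ a \emph{knot} in $\#^{k_2}(S^1\times S^2)$ with an integral surgery to $\#^{k_3}(S^1\times S^2)$. At this stage the paper invokes a deep Dehn surgery classification (Gabai's Property~R together with Scharlemann's work on reducible manifolds, packaged as Theorem~\ref{thm:Dehn}) to conclude that $L$ is an unknot. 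Then $E(L)$ is reducible (or a solid torus), its Heegaard splitting is reducible, and Lemma~\ref{lem:Heeg-Kirb} transfers this to reducibility of $\T$. The surgery input is essential; without it, the step you are stuck on remains stuck.
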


The proof makes essential use of ``Heegaard-Kirby diagrams'', which we introduce in Section \ref{sec:Heeg-Kirb}, and which let us pass easily between handle decompositions and trisections of four-manifolds.  In particular, they allow us to utilize deep classification results for Dehn surgeries between connected sums of $S^1\times S^2$ (drawing on multiple works \cite{gabai:II, gordon:combinatorial,scharlemann:reducible}) in order to prove the trisection classification result Theorem \ref{thm:class}. We then use Heegaard-Kirby diagrams to go back in the opposite direction, leveraging Theorem \ref{thm:class} to prove the following Dehn surgery result.

\begin{corollary}\label{coro:links}
	Suppose that $L$ is a $c$--component link in $\#^k(S^1\times S^2)$ with an integral framed surgery to $\#^{c+k}(S^1\times S^2)$.
\begin{enumerate}
\item If $L$ has tunnel number $c+k-1$, then $L$ is a $c$--component 0--framed unlink.
\item If $L$ has tunnel number $c+k$, then there is a sequence of handleslides taking the split union of $L$ with a $(k+1)$--component 0--framed unlink to a $(c+k+1)$--component 0--framed unlink.
\end{enumerate}
\end{corollary}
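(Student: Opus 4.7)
The plan is to package the surgery on $L$ as a trisection of a closed four-manifold and invoke Theorem \ref{thm:class}.

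First, assemble the closed four-manifold $X$ from the surgery data: the trace of the surgery on $L$ is a cobordism from $\#^k(S^1\times S^2)$ to $\#^{c+k}(S^1\times S^2)$, and capping both ends with $\natural^k(S^1\times D^3)$ and $\natural^{c+k}(S^1\times D^3)$ produces $X$ with handle decomposition consisting of one $0$-handle, $k$ $1$-handles, $c$ $2$-handles along $L$, $c+k$ $3$-handles, and one $4$-handle, so that $\chi(X) = 2-2k$.

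Next, use the tunnel-number hypothesis together with the Heegaard-Kirby dictionary of Section \ref{sec:Heeg-Kirb} to produce a trisection $\T = (\Sigma, X_1, X_2, X_3)$ of $X$. A tunnel system realizing $t(L)$ lets us place $L$ on a Heegaard surface of $\#^k(S^1\times S^2)$, and recording the $c$ two-handle framings gives a trisection with $k_1 = c+k$ (after cyclic relabeling to place the largest index first), $k_3 = k$, and $k_2 = g-c$ by the Euler-characteristic identity $k_1+k_2+k_3 = g+2k$. For part (1), the bound $t(L)\leq c+k-1$, combined with a judicious choice of Heegaard-Kirby diagram (and destabilization when necessary), gives $g\leq c+k+1$, so $k_1 = c+k\geq g-1$ and Theorem \ref{thm:class} applies. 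The conclusion is that $\T$ is a connected sum of genus-one trisections and $X\cong\#^k(S^1\times S^3)$; the $\pm\CP^2$ summand possibility in Theorem \ref{thm:class} is ruled out because a $\pm\CP^2$ summand of $X$ would make the surgery on $L$ yield a different connected sum than $\#^{c+k}(S^1\times S^2)$. Translating this standard trisection back through the Heegaard-Kirby dictionary identifies $L$ as the $c$-component $0$-framed unlink.

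Part (2) follows from part (1) by stabilization. Taking the split union $L \cup L_0$ with a $(k+1)$-component $0$-framed unlink $L_0\subset \#^k(S^1\times S^2)$ corresponds to introducing $k+1$ cancelling $2$-/$3$-handle pairs, leaving $X$ unchanged, and produces a $(c+k+1)$-component link in $\#^k(S^1\times S^2)$ with surgery to $\#^{(c+k+1)+k}(S^1\times S^2)$. Since the tunnel number of a split link is bounded by the sum of the tunnel numbers of its pieces, $t(L\cup L_0)\leq t(L)+t(L_0)\leq (c+k)+k = c+2k = (c+k+1)+k-1$, exactly the bound required by part (1) for the $(c+k+1)$-component link in $\#^k(S^1\times S^2)$. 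Applying part (1) to $L\cup L_0$ then delivers the asserted handleslide equivalence to the $(c+k+1)$-component $0$-framed unlink.

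The hard part will be arranging the trisection parameters to fit Theorem \ref{thm:class}: the naive tunnel-system construction yields a trisection of genus as large as $c+t(L)\leq 2c+k-1$, whereas the inequality $k_1\geq g-1$ demands $g\leq c+k+1$. Reconciling these via a careful Heegaard-Kirby construction or destabilization argument is the technical crux of the corollary.
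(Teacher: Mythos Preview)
Your strategy for part~(1) points the right way but is both more roundabout and less complete than the paper's argument. When $t(L)=c+k-1$ the Heegaard surface of $E(L)$ has genus $t(L)+1=c+k$, so the induced trisection is a $(c+k;k,k,c+k)$-trisection with one parameter equal to $g$, not $g-1$. In this case the paper does not invoke Theorem~\ref{thm:class} at all: since $H_\alpha\cup H_\gamma$ is a genus $c+k$ Heegaard splitting of $\#^{c+k}(S^1\times S^2)$, every curve bounding a disk in $H_\gamma$ also bounds one in $H_\alpha$; as $L$ sits on $\Sigma$ bounding disks in $H_\gamma$, it therefore bounds in $H_\alpha$ and is visibly an unlink in $H_\alpha\cup H_\beta=\#^k(S^1\times S^2)$. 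Your step of ``translating the standard trisection back through the Heegaard--Kirby dictionary'' does not automatically recover $L$ itself as an unlink --- knowing the abstract trisection type does not pin down the specific link $L$ without a further disk-system comparison.

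Your approach to part~(2) has a genuine gap. The claimed bound $t(L\cup L_0)\le t(L)+t(L_0)$ is false: already for two split unknots in $S^3$ one has $t(U\sqcup U)=1>0=t(U)+t(U)$, since an extra tunnel is always needed to connect the split pieces. (Moreover, the tunnel number of a $(k+1)$-component unlink in $\#^k(S^1\times S^2)$ is $2k$, not $k$, since $H_1$ of its exterior has rank $2k+1$.) With the correct bounds, $t(L\cup L_0)$ exceeds $(c+k+1)+k-1$, and the reduction to part~(1) does not go through. The paper's argument is entirely different: the hypothesis $t(L)=c+k$ directly yields a $(c+k+1;k,k+1,c+k)$-trisection with $c+k=g-1$, so Theorem~\ref{thm:class} applies to \emph{that} trisection. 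One then extends $L$ (viewed as boundaries of primitive disks in $H_\gamma$) by $k+1$ reducing curves $U$ to a full defining system $\overline\D$ for $H_\gamma$, and compares it to the standard defining system $\D_{\gamma'}$ via Johannson's theorem that any two complete disk systems in a handlebody are slide-equivalent.

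Finally, your closing paragraph misidentifies the trisection genus: a tunnel system with $n$ arcs produces a Heegaard surface of genus $n+1$ for $E(L)$, so the trisection has genus $t(L)+1$, not $c+t(L)$. There is no ``hard part'' in arranging the parameters --- in both cases the largest $k_i$ is automatically $g$ or $g-1$.
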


As a special case of this corollary, we see that $c$--component links with tunnel number $c-1$ satisfy the Generalized Property R Conjecture, while $c$--component links with tunnel number $c$ satisfy a slight weakening of this conjecture.  We discuss the connection between trisections and the Generalized Property R Conjecture in Section \ref{sec:propR}, where we introduce a family of trisections of $S^4$ that seem highly likely to be non-standard.

\subsection*{Conventions}

Unless otherwise mentioned, we work in the smooth category throughout.  A manifold $Y$ is said to be \emph{properly embedded} in $X$ if it is transverse to $\partial X$ and $Y\cap \partial X=\partial Y$.  A \emph{proper isotopy} of $Y$ in $X$ is a homotopy of $Y$ in $X$ through proper embeddings.  Let $N(Y,X)$ denote a closed regular neighborhood of $Y$ in $X$, and let $E(Y,X)=\overline{X\setminus N(Y,X)}$ be the \emph{exterior} of $Y$ in $X$. We shall often drop the ambient space $X$ from this notation when no confusion can arise.

\subsection*{Acknowledgements}

We would like to thank David Gay and Marty Scharlemann for helpful conversations.  We would also like to thank Cameron Gordon and Nicholas Zufelt for making us aware of Theorem 6.9 of \cite{gordon:combinatorial}. The second author is grateful to Charles Frohman for sitting down with him in Iowa City to look at the Gay-Kirby paper for the first time. Finally, we thank Utah State University Moab and the organizers of the Moab Topology Conference, during which the current version of this paper was born. The first author was supported by NSF grant DMS-1400543.  The third author was supported by NSF grant DMS-1203988.

%%%%%%%%%%%%%%%%%%%%%%%%%%%%%%%%%%%%%%%%%%%%%%%%%%%%%%%%%%%
%%%%%%%%%%%%%%%%%%%%%%%%%%%%%%%%%%%%%%%%%%%%%%%%%%%%%%%%%%%
\section{Heegaard splittings}\label{sec:Heegs}
%%%%%%%%%%%%%%%%%%%%%%%%%%%%%%%%%%%%%%%%%%%%%%%%%%%%%%%%%%%
%%%%%%%%%%%%%%%%%%%%%%%%%%%%%%%%%%%%%%%%%%%%%%%%%%%%%%%%%%%

In this section, we formally introduce Heegaard splittings of compact, orientable three-manifolds.  We recommend \cite{scharlemann:handbook} for a more detailed introduction.

A \emph{Heegaard splitting} of a closed, orientable three-manifold $Y$ is a decomposition of $Y$ into a pair of three-dimensional handlebodies.  This definition can be generalized to compact $Y$ by using compression bodies.

\begin{definition}\label{def:compbods}

Let $\Sigma$ be a closed surface.  A \emph{compression body} is a compact three-manifold $H$ constructed from $\Sigma \times I$ by attaching 2--handles to $\Sigma \times \{0\}$ and capping off any resulting 2-sphere boundary components with 3--balls.  We define $\partial_+ H = \Sigma \times \{1\}$ and $\partial_- H = \partial H \setminus \partial_+ H$.

\end{definition}

\begin{definition}
Let $Y$ be a compact, connected, orientable three-manifold.  A \emph{genus $g$  Heegaard splitting} of $Y$ is a triple $(\Sigma,H_1,H_2)$ satisfying the following conditions:

\begin{itemize}
\item $Y=H_1\cup H_2$;
\item $H_i$ is a compression body for $i=1,2$; and 
\item $\Sigma=H_1\cap H_2=\partial_+ H_1=\partial_+ H_2$ is a closed, orientable surface of genus $g$.

\end{itemize}

The surface $\Sigma$ is said to be a \emph{Heegaard surface} for $Y$.
\end{definition}

Notice our definition forces $\partial Y=\partial_-H_1\cup \partial_-H_2$.  In the case that $Y$ is closed, this agrees with the above definition in which $H_1$ and $H_2$ are handlebodies.  The following result is well-known.

\begin{theorem}
Every compact, orientable, connected three-manifold admits a Heegaard splitting.
\end{theorem}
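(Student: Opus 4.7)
The plan is to extract a Heegaard splitting from a handle decomposition of $Y$. First, partition the components of $\partial Y$ into two (possibly empty) collections $\partial_1 \sqcup \partial_2$. I would then choose a self-indexing Morse function $f : Y \to [0,3]$ whose index~$0$ and index~$3$ behavior accounts for $\partial_1$ and $\partial_2$: if $\partial_1 \neq \emptyset$, no index~$0$ critical points are needed (a collar of $\partial_1$ supplies the bottom of $Y$), and otherwise a single $0$--handle is introduced; symmetrically for $\partial_2$ and $3$--handles. Set $\Sigma = f^{-1}(3/2)$, $H_1 = f^{-1}([0, 3/2])$, and $H_2 = f^{-1}([3/2, 3])$.

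The next step is to recognize $H_1$ and $H_2$ as compression bodies in the sense of Definition~\ref{def:compbods}. The key observation is that the compression-body construction given there admits an equivalent dual description: beginning from $\partial_- H$, one attaches $1$--handles (to either the boundary or to $0$--handles in the case where $\partial_- H = \emptyset$) and recovers $H$ with $\partial_+ H$ as the outer boundary surface. Since $H_1$ is built from either a $0$--handle or a collar of $\partial_1$ by attaching the $1$--handles of $f$, it fits this dual description, and the analogous argument applied to $3-f$ identifies $H_2$ as a compression body with $\partial_- H_2 = \partial_2$.

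The main technical obstacle is arranging for $\Sigma$ to be \emph{connected}, as required by the definition of a Heegaard splitting. The naive midlevel surface may have several components, for example when $\partial_1$ has multiple components or when more than one $0$--handle is used, and the set of critical points of index~$1$ does not automatically connect these pieces. The fix is to insert canceling $1$--/$2$--handle pairs into $f$, positioning each added $1$--handle so that it joins two distinct components of the current midlevel surface. Because $Y$ is connected, finitely many such pairs suffice to produce a connected $\Sigma$; each added pair preserves the compression-body structure of $H_1$ and $H_2$ while raising the genus of $\Sigma$. The resulting triple $(\Sigma, H_1, H_2)$ is then the desired Heegaard splitting.
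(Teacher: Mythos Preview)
The paper does not actually prove this theorem; it is stated as a well-known result and left without proof, so there is nothing to compare your argument against on the paper's side.

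Your Morse-theoretic argument is one of the standard proofs and is essentially correct. The dual description of compression bodies you invoke (building up from $\partial_- H$ by attaching $1$--handles, or from a ball when $\partial_- H=\emptyset$) is indeed equivalent to Definition~\ref{def:compbods}, and your fix for connectedness of $\Sigma$ via canceling $1$--/$2$--handle pairs works. A slightly cleaner alternative is to arrange connectedness at the handle-decomposition level before cutting: cancel all but one $0$--handle against $1$--handles (or, when $\partial_1\neq\emptyset$, use $1$--handles to connect the components of $\partial_1\times I$), and dually for the top; then $f^{-1}(3/2)$ is automatically connected. Either way the conclusion is the same.
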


One way to describe a Heegaard splitting of a closed three-manifold is by means of a Heegaard diagram, which is defined using compressing disks in each of the handlebodies in the splitting.

\begin{definition}

Suppose $H$ is a genus $g$ handlebody and $\mathcal{D}=D_1\cup \cdots \cup D_g$ is a disjoint union of properly embedded disks in $H$.  The collection $\mathcal{D}$ is said to be \emph{complete} if $E(\mathcal{D}, H)$ is homeomorphic to a ball.  In this case, the curves $\partial\mathcal{D}$ in $\partial H$ are said to be a \emph{defining set of curves} for $H$.
\end{definition}

\begin{definition}

A \emph{Heegaard diagram} of a Heegaard splitting $(\Sigma,H_\alpha,H_\beta)$ is a triple $(\Sigma,\alpha,\beta)$, where $\alpha$ is a defining set of curves for $H_\alpha$ in $\Sigma$ and $\beta$ is a defining set of curves for $H_\beta$ in $\Sigma$. 

\end{definition}

Although every Heegaard diagram determines a single Heegaard splitting, every Heegaard splitting admits infinitely many distinct Heegaard diagrams.  The simplest Heegaard diagrams and their corresponding splittings arise frequently in this paper.

\begin{definition}
A Heegaard diagram $(\Sigma,\alpha,\beta)$ in a genus $g$ surface $\Sigma$ is said to be \emph{$(g,k)$--standard} if the $\alpha$ and $\beta$ curves can be indexed so that
\begin{itemize}
\item $\alpha_i=\beta_i$ for all $i\leq k$.
\item $|\alpha_i\cap \beta_j|=\delta_{ij}$ for all $i>k$.
\end{itemize}
A Heegaard diagram is called \emph{standard} if it is $(g,k)$--standard for some $0\leq k\leq g$.
\end{definition}

A $(g,k)$--standard Heegaard diagram defines a genus $g$ Heegaard splitting of $\#^k(S^1\times S^2)$ for all $g\geq k\geq 0$ (throughout, we define $\#^0(S^1\times S^2)=S^3$).  In fact this is the unique genus $g$ splitting of $\#^k(S^1\times S^2)$ up to isotopy according to the following classical result of Waldhausen.

\begin{theorem}\label{thm:waldhausen}\cite{waldhausen} Suppose $Y\cong\#^k(S^1\times S^2)$.  Then for all $g\geq k\geq 0$, there is a unique genus $g$ Heegaard splitting of $Y$ up to isotopy.

\end{theorem}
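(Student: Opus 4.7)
The plan is proof by induction on the Heegaard genus $g$, with Haken's lemma on reducing spheres as the key structural input. For existence, one reads the $(g,k)$-standard diagram as a handle decomposition: the $k$ identified pairs $\alpha_i=\beta_i$ contribute $S^1\times S^2$ summands, while the remaining $g-k$ geometrically dual pairs correspond to canceling $1$-$2$ handle pairs, so the resulting manifold is $\#^k(S^1\times S^2)$.

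The base case for uniqueness is $k=0$, meaning $Y=S^3$. Here one must show that every genus $g$ splitting of $S^3$ is obtained from the trivial genus zero splitting by $g$ stabilizations. The standard proof locates a \emph{destabilizing pair}---compressing disks $D_1\subset H_1$ and $D_2\subset H_2$ whose boundary circles meet transversely in a single point in $\Sigma$---by a careful combinatorial analysis of how a complete disk system in one handlebody intersects one in the other. Destabilizing and inducting downward on $g$ then reduces to the unique genus zero splitting.

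For the inductive step with $k\geq 1$, let $(\Sigma,H_\alpha,H_\beta)$ be a genus $g$ splitting of $Y=\#^k(S^1\times S^2)$. Because $Y$ is reducible, Haken's lemma produces a reducing two-sphere $S\subset Y$ meeting $\Sigma$ transversely in a single essential circle $c$. The two hemispheres of $S$ are compressing disks for $H_\alpha$ and $H_\beta$ respectively, and cutting along $S$ and capping off with balls expresses $Y$ as a connected sum $Y_1\# Y_2$ and decomposes $\Sigma$ as $\Sigma_1\#\Sigma_2$, inducing a Heegaard splitting of each $Y_i$ of genus $g_i$ with $g_1+g_2=g$. Prime decomposition forces $Y_i\cong\#^{k_i}(S^1\times S^2)$ with $k_1+k_2=k$, and since Heegaard genus dominates the rank of $\pi_1$ one has $g_i\geq k_i$. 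The inductive hypothesis applies to each summand, so each induced splitting is $(g_i,k_i)$-standard up to isotopy, and one checks that assembling the two standard diagrams along $c$ yields a $(g,k)$-standard diagram for $(\Sigma,H_\alpha,H_\beta)$.

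The main obstacle is the $S^3$ base case: producing a destabilizing pair of disks is highly nontrivial and constitutes the technical heart of the argument (this is essentially the content of Waldhausen's original paper). The connected-sum reduction via Haken's lemma, by contrast, becomes a clean induction once that base case and Haken's lemma are in hand; the only care required is in verifying that the combinatorial reassembly of two standard diagrams at $c$ is genuinely a standard diagram, and not merely a diagram for an isotopic splitting, which uses the fact that defining curves are only well-defined up to handle slides and surface isotopy within a fixed handlebody.
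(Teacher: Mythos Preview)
The paper does not supply a proof of this theorem; it is simply cited as Waldhausen's classical result. So there is no ``paper's proof'' to compare against. Your outline is, however, essentially the standard modern route to the statement: handle $S^3$ directly (Waldhausen's original hard work), and for $k\geq 1$ use Haken's lemma to split off a summand and induct.

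That said, your write-up has a genuine gap. You tacitly assume the reducing sphere $S$ produced by Haken's lemma is \emph{separating}, so that cutting and capping yields two pieces $Y_1\# Y_2$ with $g_1+g_2=g$. But the reducing curve $c$ is separating on $\Sigma$ if and only if $S$ is separating in $Y$, and for $k=1$ (i.e.\ $Y\cong S^1\times S^2$) every essential sphere is non-separating. In that case $c$ is non-separating on $\Sigma$; compressing and capping gives a single genus $g-1$ Heegaard splitting of $S^3$, not a pair of summands. The fix is easy---treat the non-separating case by cutting along $S$, capping with balls, and applying the inductive hypothesis to the resulting genus $g-1$ splitting of $\#^{k-1}(S^1\times S^2)$---but it must be said. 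Your induction scheme is also slightly muddled: you announce induction on $g$, yet declare the base case to be $k=0$ for \emph{all} $g$, which is really a separate (and itself inductive) argument rather than a base case. Cleaner is to induct on $k$ with the $S^3$ theorem as the $k=0$ base, or to induct on $g$ with base $g=0$ and invoke the $S^3$ result whenever a summand happens to be $S^3$.
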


The connected sum operation extends naturally to Heegaard splittings.  Let $\Hh'$ and $\Hh''$ be Heegaard splittings for three-manifolds $Y'$ and $Y''$ given by $(\Sigma',H_\alpha',H_\beta')$ and $(\Sigma'',H_\alpha'',H_\beta'')$, respectively.  Then there is a natural Heegaard splitting $\Hh$ for $Y'\#Y''$ given by $(\Sigma,H_\alpha,H_\beta)$ where
$$(\Sigma,H_\alpha,H_\beta) = (\Sigma' \# \Sigma'',H_\alpha'\natural H_\alpha'' ,H_\beta'\natural H_\beta'').$$

Let $\delta$ denote the curve on $\Sigma$ that determines the decomposition $\Sigma=\Sigma'\#\Sigma''$.  This curve has the property that it bounds disks $D_\alpha$ and $D_\beta$ in $H_\alpha$ and $H_\beta$, respectively.

\begin{definition}
	A Heegaard splitting $\Hh$ given by $(\Sigma,H_\alpha,H_\beta)$  is \emph{reducible} if there exists a curve $\delta$ on $\Sigma$ that bounds disks $D_\alpha$ and $D_\beta$ in $H_\alpha$ and $H_\beta$, respectively.
\end{definition}

The curve $\delta$ is called a \emph{reducing} curve.  The following classical result is due to Haken. 

\begin{theorem}\label{thm:haken}\cite{haken}
	Suppose $Y$ is a reducible three-manifold with Heegaard surface $\Sigma$.  Then there is a reducing sphere $S$ for $Y$ that meets $\Sigma$ in a single reducing curve.
\end{theorem}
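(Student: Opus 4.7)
The plan is to begin with any essential $2$-sphere in $Y$ (which exists because $Y$ is reducible), place it in general position with respect to $\Sigma$, and then simplify its intersection with $\Sigma$ via innermost-disk arguments until only a single reducing curve remains.

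I would first choose an essential $2$-sphere $S\subset Y$ transverse to $\Sigma$ that minimizes $|S\cap \Sigma|$ among all such spheres; then $S\cap \Sigma$ is a disjoint union of simple closed curves. The intersection must be nonempty: otherwise $S$ would lie in one of the handlebodies $H_\alpha, H_\beta$, each of which is irreducible, so $S$ would bound a $3$-ball there, contradicting the essentialness of $S$ in $Y$.

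Next, I would verify that every curve of $S\cap \Sigma$ is essential on $\Sigma$. Suppose some curve $\gamma$ bounds a disk in $\Sigma$. Choosing $\gamma$ innermost on $S$, one gets a subdisk $D'\subset S$ with $\partial D' = \gamma$ and $\mathrm{int}(D')\cap\Sigma = \emptyset$, so $D'$ lies in a single handlebody; by a further innermost-disk reduction on $\Sigma$, one may take a disk $D\subset \Sigma$ bounded by $\gamma$ with $\mathrm{int}(D)\cap S = \emptyset$. Then $D\cup D'$ is a $2$-sphere in the closure of that handlebody, and by irreducibility it bounds a $3$-ball; pushing $S$ across this ball eliminates $\gamma$ from the intersection without creating new curves, contradicting minimality.

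The main difficulty is the last step: reducing the intersection to a single curve. This is essentially Haken's Lemma, which produces essential compressing disks $D_\alpha \subset H_\alpha$ and $D_\beta\subset H_\beta$ with $\partial D_\alpha = \partial D_\beta$ on $\Sigma$. Starting from an innermost subdisk $D_\alpha\subset S$, lying (say) in $H_\alpha$, the previous step guarantees $\partial D_\alpha$ is essential on $\Sigma$, so $D_\alpha$ is a genuine compressing disk for $H_\alpha$. The task is then to produce a compressing disk $D_\beta \subset H_\beta$ with the same boundary curve; this is typically accomplished by a double-curve-sum or oriented cut-and-paste argument applied to the remaining portion of $S$ together with a generic compressing disk for $H_\beta$, iteratively discarding inessential intersections while preserving essentialness of the boundary on $\Sigma$ — this combinatorial juggling is where the real technical work lives. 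Once such a pair $(D_\alpha,D_\beta)$ is found, the sphere $S' = D_\alpha\cup D_\beta$ meets $\Sigma$ in exactly the curve $\partial D_\alpha$, which bounds disks on both sides and is therefore a reducing curve. Finally, $S'$ is essential in $Y$: if it bounded a $3$-ball, a standard argument would force $\partial D_\alpha$ to bound a disk in $\Sigma$, contradicting its essentialness there.
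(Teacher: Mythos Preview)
The paper does not prove this theorem; it is quoted as a classical result of Haken with a bare citation. So there is no in-paper proof to compare against, and your proposal must stand on its own.

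Two genuine gaps. First, your step~4 is essentially circular. You write ``This is essentially Haken's Lemma'' and then gesture at ``combinatorial juggling'' without carrying it out. But Haken's Lemma \emph{is} the statement you are trying to prove, so invoking its technical core as a black box is not a proof. The actual argument does not try to manufacture a disk $D_\beta$ with prescribed boundary $\partial D_\alpha$; rather, one fixes a complete meridian disk system for (say) $H_\beta$, puts $S$ in general position with respect to both $\Sigma$ and these disks, and performs surgeries on $S$ along innermost subdisks of the meridian system. Each surgery replaces $S$ by two spheres, at least one of which is still essential, and strictly reduces a suitable complexity. This is the content you are skipping.

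Second, your final essentialness claim is wrong. You assert that if $S' = D_\alpha \cup D_\beta$ bounded a $3$-ball, then $\partial D_\alpha$ would have to bound a disk in $\Sigma$. Counterexample: take any genus $g\ge 1$ Heegaard splitting of $S^3$. There are curves on $\Sigma$ that are essential yet bound disks in both handlebodies, and the resulting sphere certainly bounds a ball since $S^3$ is irreducible. The point is that your $S'$ is built from scratch out of $D_\alpha$ and some unrelated $D_\beta$; it has no connection to the original essential sphere $S$, so there is no reason for it to be essential. In the correct proof, essentialness survives precisely because each new sphere arises by surgering an essential one, and an essential sphere cannot split into two inessential pieces.
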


Consequently, every Heegaard splitting of a reducible three-manifold is reducible.  The final operation on Heegaard splittings that we will need to understand is stabilization.

\begin{definition}\label{def:heegstab}
Let $\Hh=(\Sigma,H_\alpha,H_\beta)$ be a Heegaard splitting of $M$ and let $\omega$ be a boundary parallel arc properly embedded in $H_\beta$.  Then if $H_\alpha'=H_\alpha\cup N(\omega)$, $H_\beta'=E(\omega, H_\beta)$ and $\Sigma'=H_\alpha'\cap H_\beta'$, the triple $\Hh'=(\Sigma',H_\alpha',H_\beta')$ is also a Heegaard splitting of $Y$, called the \emph{stabilization} of $\Hh$.  Conversely, we say that $\Hh$ is a \emph{destabilization} of $\Hh'$.
\end{definition}

Alternatively, the stabilization of a Heegaard splitting $\Hh$ is just $\Hh\# \Ss$, where $\Ss$ is the genus one Heegaard splitting of $S^3$.  Hence, if both $(\Sigma',H_\alpha',H_\beta')$ and $(\Sigma'',H_\alpha'',H_\beta'')$ are stabilizations of the same Heegaard splitting of $Y$, then there is an isotopy of $Y$ taking $\Sigma'$ to $\Sigma''$ and $H_\alpha'$ to $H_\alpha''$. In other words, the Heegaard splittings are isotopic.  On the other hand, destabilizations do not always exist, and when they do they are not always unique up to isotopy.

As an example, when $g>k$, the $(g,k)$--standard Heegaard splitting is stabilized, so it follows from Theorem \ref{thm:waldhausen} that every genus $g$ Heegaard splitting of $\#^k(S^1\times S^2)$ with $g>k$ is stabilized.

Two Heegaard splittings $\Hh_1$ and $\Hh_2$ for $Y$ are called \emph{stably isotopic} if there exist integers $m_1$ and $m_2$ such that the Heegaard splittings $\Hh_1'$ and $\Hh_2'$ obtained by stabilizing $\Hh_1$ and $\Hh_2$ $m_1$ times and $m_2$ times, respectively, are isotopic.  The following is a classical result of Reidemeister and Singer.

\begin{theorem}\label{thm:reid_sing} \cite{reidemeister,singer}
	Any two Heegaard splittings for a fixed three-manifold are stably isotopic.
\end{theorem}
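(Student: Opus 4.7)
The plan is to realize each Heegaard splitting by a self-indexing Morse function and then to interpolate between two such functions using Cerf theory. First, given a Heegaard splitting $(\Sigma,H_\alpha,H_\beta)$ of a closed three-manifold $Y$, I would produce a smooth function $f\colon Y\to[0,3]$ whose index-$i$ critical points all lie in the level $\{f=i\}$, with $\Sigma=f^{-1}(3/2)$, $H_\alpha=f^{-1}[0,3/2]$, and $H_\beta=f^{-1}[3/2,3]$. Such a self-indexing Morse function exists by starting with any Morse function compatible with the handle structure on $H_\alpha$ and rearranging critical points; the case with nonempty boundary is handled analogously using the compression-body structure of Definition~\ref{def:compbods}.

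Given two splittings $\Hh_0$ and $\Hh_1$ of $Y$ with corresponding Morse functions $f_0,f_1$, I would connect them by a smooth path $\{f_t\}_{t\in[0,1]}$ in the space of functions $Y\to\R$. After a generic perturbation, Cerf's theorem guarantees that only two codimension-one phenomena occur along the path: handle slides, where a single trajectory between two critical points of equal index degenerates for an instant, and births or deaths of canceling pairs of critical points of consecutive indices. Between these events, the sublevel set $f_t^{-1}[0,3/2]$ evolves by ambient isotopy, so the associated Heegaard splitting is unchanged up to isotopy.

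The key step is to interpret each Cerf phenomenon at the level of Heegaard splittings. Handle slides among index-$1$ (respectively index-$2$) critical points correspond to slides of $\alpha$-curves (respectively $\beta$-curves) in a Heegaard diagram, and preserve the splitting up to isotopy. A birth of an index-$(1,2)$ pair is precisely a stabilization in the sense of Definition~\ref{def:heegstab}, while a death of such a pair is a destabilization. Births and deaths involving index-$0$ or index-$3$ critical points can be ruled out after a preliminary normalization of the path that keeps exactly one critical point of index $0$ and one of index $3$ throughout, since $Y$ is connected.

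The main obstacle, and the technical heart of the argument, is to rearrange the birth and death events along the path so that all births precede all deaths. Once this rearrangement is achieved using the Cerf graphic, stabilizing $\Hh_0$ by the total number of births and $\Hh_1$ by the total number of deaths produces two Heegaard splittings that are joined by a path consisting only of handle slides and isotopies, and hence are isotopic. This yields the desired stable isotopy between $\Hh_0$ and $\Hh_1$. The delicate part is controlling how birth and death arcs in the Cerf graphic can be pushed past one another without introducing new higher-codimension degeneracies; this is precisely the analogue of the rearrangement lemma that Gay and Kirby employ in the four-dimensional setting to establish the corresponding stable uniqueness theorem for trisections.
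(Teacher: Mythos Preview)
Your outline is a valid sketch of the Cerf-theoretic proof of Reidemeister--Singer, but note that the paper does not actually prove this theorem: it is quoted as a classical result with citations to \cite{reidemeister,singer} and no argument is given, so there is nothing in the paper to compare your proof against directly. For what it is worth, the original arguments of Reidemeister and Singer are combinatorial, carried out at the level of Heegaard diagrams rather than Morse functions; the Cerf-theoretic argument you describe is a later reformulation, and it is exactly the three-dimensional template that Gay and Kirby lift to dimension four to prove Theorem~\ref{thm:tristab}. In that sense your choice of approach fits the surrounding paper well even though the paper itself omits the proof.

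One small point of precision: in a generic one-parameter family of \emph{functions}, the codimension-one events are births/deaths and crossings of critical values; the degenerate trajectory between two critical points of equal index that you call a ``handle slide'' is a codimension-one event in the space of gradient-like vector fields rather than of functions. You also need the family to remain \emph{ordered} (index-$1$ critical values below index-$2$ values) so that $f_t^{-1}(3/2)$ stays a Heegaard surface between birth/death events, which you do not state explicitly. These are standard adjustments and do not affect the overall strategy or the correctness of your conclusion.
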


\section{Trisections}

In this section we describe how analogues of the concepts introduced in the previous section naturally arise in the theory of trisections as well.  Gay and Kirby proved the following existence result for trisections.

\begin{theorem}\label{thm:triext} \cite{gay-kirby:trisections}
Every closed, orientable, connected, smooth four-manifold admits a balanced trisection.
\end{theorem}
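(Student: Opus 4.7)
The plan is to build the trisection from a handle decomposition of $X$. Fix a self-indexing Morse function $f:X\to[0,4]$ with a single 0--handle and a single 4--handle, with $k_1$ 1--handles, $\ell$ 2--handles, and $k_3$ 3--handles. Adjoining canceling $(1,2)$-- or $(2,3)$--handle pairs allows me freely to increase any of $k_1,\ell,k_3$, so I arrange $k_1=k_3=k$ at the outset. Setting
$$X_1=f^{-1}([0,3/2]),\qquad X_3=f^{-1}([5/2,4]),$$
I get 4--dimensional 1--handlebodies of genus $k$, and the middle slab $M=f^{-1}([3/2,5/2])$ is obtained from $\partial X_1\times I\cong \#^k(S^1\times S^2)\times I$ by attaching $\ell$ four-dimensional 2--handles along a framed link $L\subset \partial X_1$.

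The heart of the proof is to split $M$ into a 4--dimensional 1--handlebody $X_2$ containing the 2--handles, plus two collars that are absorbed back into $X_1$ and $X_3$. I choose a standard Heegaard surface $\Sigma\subset\partial X_1$ of sufficiently large genus $g$ (available by Theorem~\ref{thm:waldhausen}) and isotope $L$ into bridge position: the intersection of $L$ with each handlebody $V_1,V_2$ of the splitting $\partial X_1=V_1\cup_\Sigma V_2$ is a collection of boundary-parallel arcs. With $L$ in bridge position, each 2--handle $D^2\times D^2$ attached along a 1--bridge component of $L$ can be reparametrized as a product $D^1\times D^3$ whose two $D^3$ attaching regions lie on opposite sides of $\Sigma$ --- precisely the description of a 4--dimensional 1--handle. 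Re-packaging the 2--handles in this way, together with a thickening of one Heegaard handlebody, exhibits $X_2$ as a 4--dimensional 1--handlebody.

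To complete the trisection I enlarge $X_1$ by a collar of $V_2$ pushed into $M$ from below and, dually, enlarge $X_3$ by a collar pushed in from above, so that the pairwise intersections $X_i\cap X_j$ become 3--dimensional handlebodies of genus $g$ and the triple intersection is $\Sigma$. The main obstacle is precisely this bridge-position and repackaging step: verifying that the middle piece is genuinely a 1--handlebody whose genus can be matched to that of $X_1$ and $X_3$ after further balancing moves. This amounts to a Cerf-theoretic rearrangement of $f$ into a generic indefinite map to $\mathbb{R}^2$ whose fibers over three sectors are the three pieces of the trisection, and is the content of the Gay--Kirby argument. The result is a balanced $(g;k,k,k)$--trisection of $X$.
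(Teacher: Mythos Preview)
The paper does not give its own proof of this theorem; it is stated with a citation to Gay--Kirby, and the paper later remarks that Proposition~\ref{thm:HeegKirb1} (the Heegaard--Kirby diagram construction) furnishes one proof. Your outline has the same high-level architecture as that handle-decomposition route: take $X_1$ to be the 0-- and 1--handles, $X_3$ the 3-- and 4--handles, and manufacture $X_2$ from the 2--handle cobordism.

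The key step in your argument, however, is not correct as written. You place $L$ in \emph{bridge position} with respect to $\Sigma$ and then assert that a 2--handle attached along a ``1--bridge component'' of $L$ can be reparametrized as a 4--dimensional 1--handle. But a component of $L$ is in 1--bridge position precisely when it is unknotted, and the attaching link of the 2--handles has no reason to be an unlink. For components of higher bridge number your reparametrization simply does not apply, and the middle piece you assemble need not be a 1--handlebody.

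The move that actually works, and the one encoded in Proposition~\ref{thm:HeegKirb1}, is not bridge position but \emph{tunnel position}: take $\Sigma$ to be a Heegaard surface for the link exterior $E(L,\#^k(S^1\times S^2))$, so that (after filling) $L$ sits inside one handlebody $H_\beta$ as a collection of cores. Dehn surgery on $L$ inside $H_\beta$ then yields another handlebody $H_\gamma$, and the trace $X_2$ of this surgery is $\natural^{g-c}(S^1\times B^3)$ because the $c$ 2--handles cancel $c$ of the 1--handles of $H_\beta\times I$. That is exactly the content of the paper's proof of Proposition~\ref{thm:HeegKirb1}, and it replaces your bridge-position step. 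Your closing appeal to Cerf theory and generic maps to $\mathbb{R}^2$ is the \emph{other} Gay--Kirby proof (via Morse 2--functions), a logically independent argument that you are conflating with the handle-decomposition one rather than carrying out.
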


Given a trisection $(\Sigma,X_1,X_2,X_3)$ of $X$ as in Definition \ref{def:tri}, let $H_\beta=X_1\cap X_2$, $H_\gamma=X_2\cap X_3$, and $H_\alpha=X_3\cap X_1$.  We call the union of the handlebodies $H_\alpha\cup H_\beta\cup H_\gamma$ the \emph{spine} of the trisection.  Note that each pair of three-dimensional handlebodies in the spine of a trisection forms a Heegaard splitting of the boundary of the four-dimensional 1--handlebody on which they lie.  For example, $(\Sigma,H_\alpha,H_\beta)$ is a Heegaard splitting of $\partial X_1\cong\#^{k_1}(S^1\times S^2)$.

On the other hand, one may thicken the spine $H_\alpha\cup H_\beta \cup H_\gamma$ to obtain a four-manifold with three boundary components, each of which is homeomorphic to some $\#^{k_i}(S^1\times S^2)$.  By \cite{laudenbach-poenaru}, there is a unique way to cap off each of these boundary components with a four-dimensional handlebody.  It follows that every trisection is determined up to diffeomorphism by its spine, which in turn is determined by a diagram of curves in $\Sigma$.

\begin{definition}
Let $X$ be a four-manifold with trisection $\T$ with spine $H_\alpha\cup H_\beta\cup H_\gamma$. A \emph{trisection diagram} for $\T$ is a quadruple $(\Sigma,\alpha,\beta,\gamma)$ where $\alpha$, $\beta$, and $\gamma$ are defining sets of curves in $\Sigma$ for $H_\alpha$, $H_\beta$, and $H_\gamma$, respectively.
\end{definition}

As with Heegaard splittings, a given trisection admits many distinct trisection diagrams, although each diagram determines a single trisection.  Using diagrams, we can readily define the class of \emph{standard} trisections.

\begin{definition} \label{def:tristand}
A trisection diagram $(\Sigma,\alpha,\beta,\gamma)$ is called \emph{standard} if each of the Heegaard diagrams $(\Sigma,\alpha,\beta)$, $(\Sigma,\alpha,\gamma)$ and $(\Sigma,\beta,\gamma)$ is standard.  A trisection is called \emph{standard} if it has a standard diagram.
\end{definition}

\begin{figure}[h!]
  \centering
    \includegraphics[width=.7\textwidth]{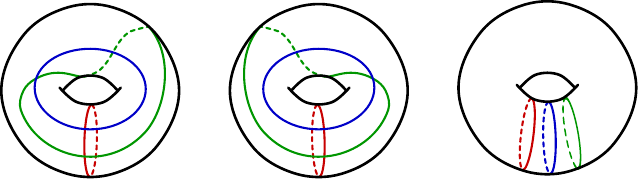}
    \caption{Standard diagrams for the three balanced genus one trisections, which correspond to $\CP^2$, $\overline\CP^2$, and $S^1\times S^3$, respectively.}
    \label{fig:GenusOneBal}
\end{figure}

It is easy to see that all six of the genus one trisections are standard.  The three possibilities in the balanced case are shown in Figure \ref{fig:GenusOneBal}, and correspond to $\CP^2$, $\overline\CP^2$, and $S^1\times S^3$.  The three unbalanced genus one trisections of $S^4$ are shown in Figure \ref{fig:GenusOneStab} and correspond to the three stabilization operations (see below).  In \cite{mz:genus2}, the authors show that all balanced genus two trisections are standard.  The main theorem of the present work states that every $(g;k_1,k_2,k_3)$--trisection with $k_1=g$ or $g-1$ is standard.  In particular, this proves that unbalanced genus two trisections are standard as well. Note that any trisection obtained by taking a connected sum of genus one trisections is standard.

\begin{figure}[h!]
  \centering
    \includegraphics[width=.7\textwidth]{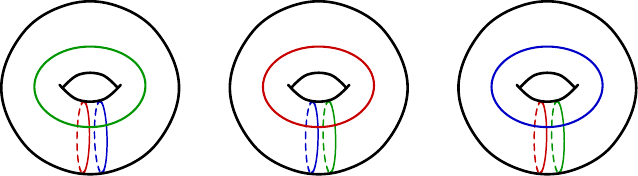}
    \caption{Standard diagrams for the three unbalanced genus one trisections, which correspond to 1--, 2--, and 3--stabilization, respectively.}
    \label{fig:GenusOneStab}
\end{figure}

We call a four-manifold $X$ \emph{reducible} if $X=X_1\#X_2$ such that neither $X_i$ is diffeomorphic to $S^4$. As with Heegaard splittings, there is a natural extension of the connected sum operation to trisections given by the formula $$(\Sigma,X_1,X_2,X_3)\#(\Sigma',X_1',X_2',X_3')= (\Sigma \# \Sigma',X_1\natural X_1',X_2\natural X_2',X_3\natural X_3').$$     There is a notion of reducibility for trisections, as well.

\begin{definition}

A trisection $\mathcal{T}$ with spine $H_\alpha\cup H_\beta\cup H_\gamma$ is \emph{reducible} if there exists a curve $\delta\subset\Sigma$ such that $\delta$ bounds disks $D_\alpha$, $D_\beta$, and $D_\gamma$ in $H_\alpha$, $H_\beta$, and $H_\gamma$, respectively. The curve $\delta$ is called a \emph{reducing curve}.

\end{definition}

An embedded three-sphere $S$ in a trisected manifold $X=X_1\cup X_2\cup X_3$ is said to be \emph{trisected reducing sphere} if $B_i=S\cap X_i$ is a three-ball for each $i$ and $\delta=S\cap \Sigma$ is an essential closed curve.

\begin{proposition}\label{prop:red_diag}
	A trisection $\T$ is reducible if and only if $\T$ admits a trisected reducing sphere.
\end{proposition}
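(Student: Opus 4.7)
The reverse implication is straightforward. Given a trisected reducing sphere $S$, set $D_\alpha = S \cap H_\alpha$, $D_\beta = S \cap H_\beta$, $D_\gamma = S \cap H_\gamma$, and $\delta = S \cap \Sigma$. Each $D_i$ coincides with the disk $B_j \cap B_k$ in which two of the component 3-balls of $S$ meet, so $D_i$ is a properly embedded compressing disk in the handlebody $H_i$ with $\partial D_i = \delta$. Hence $\delta$ is a reducing curve.

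For the forward implication, assume $\T$ is reducible with reducing curve $\delta$ and compressing disks $D_\alpha \subset H_\alpha$, $D_\beta \subset H_\beta$, $D_\gamma \subset H_\gamma$. Since $\partial X_1 = H_\alpha \cup H_\beta$, $\partial X_2 = H_\beta \cup H_\gamma$, and $\partial X_3 = H_\alpha \cup H_\gamma$, pairing the disks along $\delta$ produces three 2-spheres
\[
S_1 = D_\alpha \cup D_\beta \subset \partial X_1, \quad S_2 = D_\beta \cup D_\gamma \subset \partial X_2, \quad S_3 = D_\gamma \cup D_\alpha \subset \partial X_3.
\]
The plan is to produce a 3-ball $B_i \subset X_i$ with $\partial B_i = S_i$ for each $i$ and then to verify that $S = B_1 \cup B_2 \cup B_3$ is a trisected reducing sphere.

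The key technical step is to show that each $S_i$ bounds a 3-ball in $X_i$. Since $X_i \cong \natural^{k_i}(S^1\times D^3)$ deformation retracts to a wedge of circles, $S_i$ is null-homotopic in $X_i$. The fact I would invoke is that every embedded 2-sphere in the boundary of a four-dimensional 1-handlebody bounds a 3-ball in the handlebody. One route is to pass to the double $\#^{k_i}(S^1\times S^3)$, in which every 2-sphere bounds a 3-ball (a consequence of Laudenbach-Po\'enaru \cite{laudenbach-poenaru}), and then arrange the 3-ball to lie on one side of the doubling locus. This is the step I expect to require the most care: producing a 3-ball in the double is easy, but ensuring that it can be isotoped into $X_i$ requires exploiting the product structure of a collar of $\partial X_i$.

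With the $B_i$ in hand, the gluing is automatic. The pairwise intersection $B_1\cap B_2$ must lie in $X_1\cap X_2 = H_\beta$ and in $\partial B_1\cap\partial B_2 = D_\beta$, so $B_1\cap B_2 = D_\beta$; analogously $B_2\cap B_3 = D_\gamma$, $B_1\cap B_3 = D_\alpha$, and $B_1\cap B_2\cap B_3 = \delta$. Three 3-balls meeting pairwise along disks with common boundary circle form an embedded 3-sphere (the standard ``three-page book'' decomposition), and by construction $S\cap X_i = B_i$ and $S\cap \Sigma = \delta$, so $S$ is a trisected reducing sphere.
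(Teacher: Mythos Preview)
Your overall architecture matches the paper's exactly: reverse direction immediate, forward direction by forming the three spheres $S_i = D_{\cdot} \cup D_{\cdot}$ in $\partial X_i$, capping each by a 3-ball $B_i \subset X_i$, and gluing. The only substantive step is the one you flag, namely producing $B_i$.

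Here your justification is shaky. Laudenbach--Po\'enaru \cite{laudenbach-poenaru} concerns extending diffeomorphisms of $\#^k(S^1\times S^2)$ over $\natural^k(S^1\times D^3)$; it does not directly assert that every 2-sphere in $\#^k(S^1\times S^3)$ bounds a 3-ball, and even granting that, your ``push the ball back into $X_i$'' step is exactly where the difficulty hides---a 3-ball in the double could a priori be knotted relative to the doubling locus. So as written this is a genuine gap, and you acknowledge as much.

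The paper resolves this step differently and more simply, staying in dimension three. View $X_i$ as a 0-handle with $k_i$ 1-handles attached. The sphere $S_i$ sits in $\partial X_i \cong \#^{k_i}(S^1\times S^2)$, and by Laudenbach's theorem on spheres in such 3-manifolds \cite{laudenbach:spheres} one may isotope and slide the 1-handles until their belt spheres are disjoint from $S_i$. Then $S_i$ lies in the boundary 3-sphere of the 0-handle, away from the 1-handle feet, and so bounds an obvious 3-ball inside the 0-handle. This avoids the doubling maneuver entirely and cites a 3-dimensional result that is tailor-made for the situation. The gluing paragraph you wrote is fine and is essentially what the paper leaves implicit.
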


\begin{proof}

Suppose $\T$ is reducible. Let $R_1\subset \partial X_1$ denote the two-sphere $D_\alpha\cup D_\beta$, and note that $X_1$ has a handle decomposition consisting only of a single $0$--handle and $k_1$ $1$--handles.  It is possible that the belt spheres of the $1$--handles intersect $R_1$ initially, but by \cite{laudenbach:spheres}, we can isotope and handle slide the $1$--handles over one another until their belt spheres are disjoint from $R_1$. It follows that $R_1$ can be assumed to lie in the boundary of the 0--handle such that it is disjoint form the attaching regions of the 1--handles.  Since this boundary is diffeomorphic to $S^3$, it follows that $R_1$ bounds a three--ball $B_1$ in $X_1$.
% is disjoint from the co-cores of the $1$--handles, and as such, there is a 3-sphere $S'$ such that $R \subset S'$ and $S'$ bounds a 4-ball in $X_i$.  It follows that $R$ is the boundary of a $3$--ball properly embedded in $X_i$. 

A similar argument produces three-balls $B_2$ and $B_3$ bounded by $D_\beta\cup D_\gamma$ and $D_\gamma\cup D_\alpha$, respectively. The union of these three-balls along their pairwise intersections gives the desired three-sphere $S$.  The converse is clear, since $\delta=S\cap\Sigma$ is a reducing curve.

%Hence for each $1\leq i\leq 3$ we can find a $3$--ball $B_i$ properly embedded in $X_i$ whose boundary consists of the pair of disks from $D_\alpha\cup D_\beta\cup D_\gamma$ which lie in $X_i$.  Our reducing sphere consists of $B_1\cup B_2\cup B_3$.

\end{proof}

This leads us to the following question, which is the four-dimensional analogue of Haken's Lemma (Theorem \ref{thm:haken}).

\begin{question}\label{ques:Haken}
	Is every trisection of a reducible four-manifold reducible?
\end{question}

If the answer is ``yes'', it would imply that the maximum number of summands that can occur in a connected sum decomposition of a closed orientable smooth four-manifold $X$ is bounded from above by the minimum genus of a trisection of $X$.  In particular, it would imply the smooth Sch\"{o}nflies conjecture.

Finally, we define the process of stabilization.

\begin{definition}\label{def:tristab}
Let $\T=(\Sigma,X_1,X_2,X_3)$ be a trisection for $X$, and let $\omega$ be a properly embedded, boundary parallel arc in $H_{jk}=X_j\cap X_k$.  A \emph{$i$--stabilization} of $\mathcal{T}$ is a trisection $\mathcal{T}'=(\Sigma',X_1',X_2',X_3')$, where $X_i'=X_i\cup N(\omega)$, $X_j'=E(\omega,X_j)$ for $j\not=i$, and $\Sigma'=X_1'\cap X_2'\cap X_3'$. Conversely, $\mathcal{T}$ is said to be an \emph{$i$--destabilization} of $\mathcal{T}'$.
\end{definition}

If $\mathcal{T}$ is a $(g;k_1,k_2,k_3)$--trisection, then the $1$--stabilization of $\mathcal{T}$ is a $(g+1;k_1+1,k_2,k_3)$ trisection (and similarly for $i=2,3$).  Definition \ref{def:heegstab} for Heegaard splittings is similar; however, the construction is symmetric -- it does not matter whether the arc $\omega$ is properly embedded in $H_1$ or $H_2$, as the resulting Heegaard splittings are isotopic. For trisections, the result of an $i$--stabilization is generally not isotopic to the result of a $j$--stabilization when $j\neq i$. However, any two $i$--stabilizations of a trisection are isotopic.  Moreover, $i$--stabilizations and $j$--stabilizations commute; the result of performing an $i$--stabilization followed by a $j$--stabilization is isotopic to the result of performing a $j$--stabilization followed by an $i$--stabilization.

We note that our definition of stabilization differs slightly from the one appearing in \cite{gay-kirby:trisections}; Gay and Kirby define stabilization to be the result of a 1--, 2--, and 3--stabilization in order to preserve the balance of a trisection.

Just as with Heegaard splittings, there is an alternative to Definition \ref{def:tristab} that uses the connected sum operation on trisections.  Recall the three unbalanced genus one trisections of $S^4$ whose diagrams are shown in Figure \ref{fig:GenusOneStab}.

\begin{definition}
	Let $\T$ be a trisection for $X$.  Then, for any unbalanced genus one trisection $\Ss$ of $S^4$, $\T'=\T\#\Ss$ is also a trisection for $X$ and is called a \emph{stabilization} of $\T$.  Any trisection $\T$ that can be written as $\T'=\T\#\Ss$ is called \emph{stabilized}.
\end{definition}

As with reducibility, there is a diagrammatic criterion which allows us to deduce that a trisection is stabilized.

\begin{proposition}\label{prop:stab_diag}

Suppose $\T$ is a trisection with spine $H_\alpha\cup H_\beta\cup H_\gamma$, and suppose there is a pair of curves $\omega, \gamma$ properly embedded in $\Sigma$ with the following properties.

\begin{itemize}

\item $\omega$ bounds a disk in $H_\alpha$ and $H_\beta$;
\item $\gamma$ bounds a disk in $H_\gamma$; and,
\item $|\omega\cap \gamma|=1$.

\end{itemize}

Then $\T$ is $1$--stabilized.  The obvious analogues hold for $i=2,3$ as well.

\end{proposition}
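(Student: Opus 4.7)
The plan is to exhibit a reducing curve on $\Sigma$ that separates off an unbalanced genus-one summand, and then to verify from the resulting diagram that this summand is the $1$-stabilization genus-one trisection $\Ss$ of $S^4$ rather than one of the other two unbalanced genus-one trisections.

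Let $T \subset \Sigma$ be a regular neighborhood of $\omega \cup \gamma$. Because $|\omega \cap \gamma| = 1$, the subsurface $T$ is a once-punctured torus, and its boundary $c = \partial T$ represents the commutator $[\omega,\gamma]$ in $\pi_1(T) = \langle \omega,\gamma\rangle$. When $g = 1$ the curves $\omega$ and $\gamma$ fill $\Sigma$, $c$ bounds a disk in $\Sigma$, and $\T$ is itself the $(1;1,0,0)$-trisection of $S^4$, i.e.\ the $1$-stabilization of the genus-zero trisection; so we may assume $g \geq 2$, in which case $c$ is essential on $\Sigma$ (its complement in $\Sigma$ has positive genus).

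Next, I would verify that $c$ bounds an embedded disk in each of $H_\alpha$, $H_\beta$, and $H_\gamma$, so that $c$ is a reducing curve. Since $\omega$ bounds a disk in $H_\alpha$, the class of $\omega$ is trivial in $\pi_1(H_\alpha)$, and therefore $c$, representing $[\omega,\gamma]$, is null-homotopic in $H_\alpha$; Dehn's lemma then supplies an embedded disk. The same argument works for $H_\beta$. For $H_\gamma$, the hypothesis that $\gamma$ bounds a disk makes $\gamma$ trivial in $\pi_1(H_\gamma)$, and the commutator again vanishes, giving the required disk via Dehn's lemma.

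With $c$ established as a reducing curve, Proposition \ref{prop:red_diag} produces a trisected reducing sphere $S \subset X$, which decomposes $\T$ as a connected sum $\T_0 \# \T_1$, where $\T_1$ is the genus-one trisection carried by $T$ after capping $c$ by a disk. Reading off the diagram of $\T_1$, the $\alpha$- and $\beta$-systems both consist of the curve $\omega$, the $\gamma$-system consists of the curve $\gamma$, and $|\omega \cap \gamma| = 1$; this is precisely the diagram of the $1$-stabilization genus-one trisection of $S^4$ shown in Figure \ref{fig:GenusOneStab}. Hence $\T_1 = \Ss$ and $\T = \T_0 \# \Ss$ is $1$-stabilized. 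The main obstacle is the Dehn's lemma step, which requires $c$ to be essential (handled by the case split above); the remaining identification of $\T_1$ as the $1$-stabilization rather than the $2$- or $3$-stabilization is immediate from the coincidence $\alpha = \beta = \omega$ in the induced diagram, which corresponds precisely to the extra handle sitting in $X_1$.
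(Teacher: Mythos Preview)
Your proof is correct and follows the same route as the paper: take $\partial N(\omega \cup \gamma)$ as the reducing curve, invoke Proposition \ref{prop:red_diag}, and identify the resulting genus-one summand as the $1$-stabilization trisection of $S^4$. The only difference is that you justify via the commutator/Dehn's lemma argument the claim that this curve bounds disks in all three handlebodies, which the paper simply asserts without proof.
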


\begin{proof}

This is clear when the genus of $\T$ is one.  When the genus is greater than one, $\delta=\partial N(\omega\cup \gamma)$ is an \emph{essential} curve which bounds disks in each of $H_\alpha,H_\beta,$ and $H_\gamma$.  By Proposition \ref{prop:red_diag}, $\delta$ is a reducing curve on $\Sigma$ that decomposes $\T$ into two trisections, one of which is the genus one trisection of $S^4$ corresponding to $1$--stabilization.

\end{proof}

Gay and Kirby show that the analogue of the classical Reidemeister-Singer Theorem (Theorem \ref{thm:reid_sing}) holds for trisections as well.

\begin{theorem}\cite{gay-kirby:trisections}\label{thm:tristab}
	Any two trisections of a smooth four-manifold $X$ are stably isotopic.
\end{theorem}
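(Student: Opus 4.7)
The plan is to prove this four-dimensional analogue of the Reidemeister--Singer theorem (Theorem \ref{thm:reid_sing}) by the same strategy as in dimension three, with Cerf theory for ordinary Morse functions replaced by the parametric theory of Morse 2-functions $f\colon X\to \R^2$. The key observation is that every trisection arises from a very special kind of Morse 2-function, and that two such functions can be connected by a generic path whose codimension-one failures correspond to (de)stabilizations.

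First, I would set up a dictionary between trisections and Morse 2-functions. Given a trisection $\T=(\Sigma,X_1,X_2,X_3)$, one builds a map $f_\T\colon X\to \R^2$ whose target decomposes into three closed sectors meeting at the origin, arranged so that $f_\T^{-1}$ of each sector recovers $X_i$, $f_\T^{-1}$ of each radial ray recovers the corresponding spine handlebody, and $f_\T^{-1}(0)=\Sigma$. The singular set of $f_\T$ consists of three arcs of indefinite folds together with a central definite critical fiber, and one verifies that every such ``trisected'' Morse 2-function determines a trisection in this way. The three types of stabilization of Definition \ref{def:tristab} correspond to introducing a local ``unknotted'' fold-eye on one of the three radial rays.

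Next, given two trisections $\T_0,\T_1$ of $X$, I would pick associated Morse 2-functions $f_0,f_1$ and connect them by a generic homotopy $\{f_t\}_{t\in[0,1]}$ of smooth maps $X\to\R^2$. By the parametric transversality/Cerf theory for maps to surfaces, such a path may be chosen so that $f_t$ is a trisected Morse 2-function except at finitely many codimension-one times, where the failures are drawn from a short list of standard local models: births and deaths of fold eyes, swallowtail moves, crossings of fold arcs, and migration of singularities between sectors.

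The main obstacle is the local analysis: for each codimension-one event in the generic family, one must check that the ambient trisection either does not change up to isotopy, or else changes by a single $i$-stabilization (or its inverse) for some $i\in\{1,2,3\}$. Births of fold eyes yield stabilizations of the relevant type, fold-crossings and swallowtails can be absorbed by isotopy after possibly a further stabilization, and sector-crossings are handled by applying the commuting property of stabilizations to push the newly introduced critical data into a standard position relative to the three sectors. Concatenating over all the codimension-one times produces a sequence of stabilizations and destabilizations from $\T_0$ to $\T_1$; since destabilization is the inverse of stabilization, one concludes that sufficiently high stabilizations of $\T_0$ and $\T_1$ are isotopic, as desired.
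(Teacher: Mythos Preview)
The paper does not contain a proof of this theorem; it is quoted as a result of Gay and Kirby with a bare citation to \cite{gay-kirby:trisections}, so there is no ``paper's own proof'' to compare against.

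That said, your outline is broadly the Gay--Kirby strategy: associate to a trisection a Morse 2-function to the disk, connect two such maps by a generic homotopy, and analyze the codimension-one singularities. A few points of your sketch are imprecise relative to the actual argument. The singular locus of a trisected Morse 2-function is not ``three arcs of indefinite folds together with a central definite critical fiber''; in the Gay--Kirby normal form the fold image consists of $g$ concentric indefinite fold circles together with a single outermost definite fold circle, and it is the way the indefinite folds are distributed among the three sectors that encodes the spine. Your description of stabilization as ``introducing a local unknotted fold-eye on one of the three radial rays'' is closer to the mark, but the actual picture is the birth of an innermost triangle of folds (an eye plus a cusp move) inside one sector.

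The substantive gap is in your ``main obstacle'' paragraph. The codimension-one events in a generic homotopy of Morse 2-functions (Reidemeister-type fold crossings, cusp births/merges, etc.) do not a priori preserve the trisected form, and the bulk of the Gay--Kirby argument is a careful sequence of auxiliary moves---introducing extra eyes and cusps, and using wrinkling/unwrinkling---to restore the trisected normal form after each event, at the cost of extra stabilizations. Phrases like ``fold-crossings and swallowtails can be absorbed by isotopy after possibly a further stabilization'' and ``sector-crossings are handled by applying the commuting property of stabilizations'' are placeholders for exactly the nontrivial content of the proof. If you intend this as more than a pointer to \cite{gay-kirby:trisections}, those steps need to be made precise.
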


%Note that although the stabilization operation outlined in \cite{gay-kirby:trisections} is somewhat different, it easily yields Theorem \ref{thm:tristab} for our notion of stabilization as well.

Aside from Theorem \ref{thm:tristab}, we know almost nothing about the set of trisections associated with a smooth manifold $X$.  Even the case of $S^4$ (equipped with its standard smooth structure) is not well-understood, although it clearly admits a family of standard trisection obtained via stabilization.  Let $\Ss^0$ denote the standard $(0,0)$--trisection of $S^4$, and let $\Ss^{k_1,k_2,k_3}$ denote the trisection obtained by performing $k_i$ $i$--stabilizations of $\Ss^0$ for $1\leq i\leq 3$.  Each of these trisections is unique up to isotopy.  Despite the fact that it is likely false (see Section \ref{sec:propR}), we boldly make the following conjecture as a four-dimensional analogue to Waldhausen's Theorem (Theorem \ref{thm:waldhausen}) for $S^4$.

\begin{conjecture}\label{ques:wald} Every trisection $\T$ of $\Ss^4$ is isotopic to $\Ss^0$ or one of its stabilizations $\Ss^{k_1,k_2,k_3}$.
\end{conjecture}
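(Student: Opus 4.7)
The plan is to leverage Theorem~\ref{thm:class} via an induction that destabilizes arbitrary trisections of $S^4$ until they fall within the theorem's scope. As a preliminary, I would compute the Euler characteristic of a $(g;k_1,k_2,k_3)$-trisection using inclusion-exclusion on the spine; for $X=S^4$ this gives the constraint $g = k_1+k_2+k_3$. Consequently the standard trisections $\Ss^{k_1,k_2,k_3}$ realize exactly those parameter tuples of $S^4$-trisections satisfying this relation. Moreover, since $\#^{k'}(S^1\times S^3)$ equals $S^4$ only when $k'=0$, and no $\CP^2$ or $\overline{\CP^2}$ summand can appear, Theorem~\ref{thm:class} already resolves the $S^4$ trisections with $\max\{k_i\}\geq g-1$: such a trisection is forced to be $\Ss^{g,0,0}$ or a permutation.

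For the remaining cases, where $\max\{k_i\}\leq g-2$, I would induct on $g$. Suppose $\T$ is such an $S^4$-trisection. It suffices to exhibit an $i$-destabilization $\T'$: by the inductive hypothesis $\T' \cong \Ss^{k_1',k_2',k_3'}$, and re-performing the $i$-stabilization recovers $\T \cong \Ss^{k_1,k_2,k_3}$, using that $i$- and $j$-stabilizations commute and that any two $i$-stabilizations of a fixed trisection are isotopic. To locate a destabilization, I would invoke Proposition~\ref{prop:stab_diag} and seek curves $\omega,\gamma\subset\Sigma$ with $\omega$ bounding disks in two of $H_\alpha,H_\beta,H_\gamma$, with $\gamma$ bounding a disk in the third, and with $|\omega\cap\gamma|=1$.

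The input for producing such a pair is Theorem~\ref{thm:waldhausen}: each of the three pairwise Heegaard splittings $(\Sigma,H_\alpha,H_\beta)$, $(\Sigma,H_\beta,H_\gamma)$, $(\Sigma,H_\alpha,H_\gamma)$ of $\#^{k_i}(S^1\times S^2)$ is isotopic to the unique $(g,k_i)$-standard splitting. Since $k_i\leq g-2<g$, each standardized diagram contains many candidate pairs $(\omega_{ij},\gamma_{ij})$ witnessing stabilization of that particular Heegaard splitting. The strategy would be to promote pairwise standardization to \emph{simultaneous} standardization, i.e., to perform handleslides on the three cut systems and self-diffeomorphisms of $\Sigma$ so that all three pairs of curves assume standard form in the same diagram. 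Once achieved, the common standard structure would immediately produce a $(\omega,\gamma)$ pair meeting the hypothesis of Proposition~\ref{prop:stab_diag}.

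The principal obstacle is precisely this simultaneous standardization step, which is the direct trisection analogue of the Powell conjecture for the Goeritz group of $S^3$: the isotopies and handleslides realizing Waldhausen's theorem for distinct pairs of cut systems need not be compatible, and there is no known mechanism to synchronize them. This is the source of the candidate non-standard trisections described in Section~\ref{sec:propR}, where pairwise standardization can be arranged but no globally destabilizing configuration is forthcoming. Thus the induction framework is clean, but everything hinges on a deep open question in surface topology; this reflects the authors' own assessment that the conjecture is likely false.
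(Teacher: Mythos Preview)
The statement you are addressing is a \emph{conjecture}, not a theorem: the paper offers no proof of Conjecture~\ref{ques:wald}, and in fact the authors preface it by saying it is ``likely false'' and devote Section~\ref{sec:propR} to describing candidate counterexamples (the trisections $\T_n$ arising from the links $L_{n,1}$). There is therefore nothing in the paper to compare your argument against.

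Your write-up is not really a proof either, and you acknowledge as much. You lay out the natural inductive strategy (destabilize until Theorem~\ref{thm:class} applies), correctly reduce the inductive step via Proposition~\ref{prop:stab_diag} to finding a suitable pair $(\omega,\gamma)$, and then identify the obstruction as the problem of \emph{simultaneously} standardizing the three pairwise Heegaard diagrams. That obstruction is genuine and is exactly why the conjecture is open: Waldhausen's theorem standardizes each pair separately, but no known argument synchronizes the three standardizations, and the examples of Section~\ref{sec:propR} suggest this may be impossible in general. So your analysis of the difficulty is accurate, but what you have written is an outline of why a direct attack stalls, not a proof.

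One small correction: when $\max\{k_i\}\geq g-1$ on $S^4$, Theorem~\ref{thm:class} does not force the parameters to be a permutation of $(g,0,0)$. The constraint $g=k_1+k_2+k_3$ also allows permutations of $(g-1,1,0)$, and the theorem shows those trisections are standard as well (this is precisely the content of the sentence immediately following the conjecture in the paper).
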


In other words, we posit that every positive genus trisection of $S^4$ stabilized. Theorem \ref{thm:class} proves Conjecture \ref{ques:wald} for the case of $(g;g-1,1,0)$--trisections. 

\begin{remark}\label{rmk:homology}
Let $X$ be a four-manifold that admits a $(g;k_1,k_2,k_3)$--trisection.  The induced handle decomposition (described in Section \ref{sec:Heeg-Kirb}) yields $\chi(X)=k_1+k_2+k_3-g+2$.  It follows that any trisection of $S^4$ must satisfy $g=k_1+k_2+k_3$.  Moreover, if $k_i=0$ for any $i$, then $\pi_1(X)\cong 1$. (The next section shows how such a trisection induces a handle decomposition of $X$ with no 1--handles.)  By work of Freedman, if $\chi(X)=2$ and $\pi_1(X)\cong 1$, then $X$ is homeomorphic to $S^4$ \cite{freedman, freedman-quinn}.
\end{remark}

%%%%%%%%%%%%%%%%%%%%%%%%%%%%%%%%%%%%%%%%%%%%%%%%%%%%%%%%%%%%%%%%%%%%%%%%%
%%%%%%%%%%%%%%%%%%%%%%%%%%%%%%%%%%%%%%%%%%%%%%%%%%%%%%%%%%%%%%%%%%%%%%%%%
\section{Heegaard-Kirby diagrams}\label{sec:Heeg-Kirb}
%%%%%%%%%%%%%%%%%%%%%%%%%%%%%%%%%%%%%%%%%%%%%%%%%%%%%%%%%%%%%%%%%%%%%%%%%
%%%%%%%%%%%%%%%%%%%%%%%%%%%%%%%%%%%%%%%%%%%%%%%%%%%%%%%%%%%%%%%%%%%%%%%%%

In this section, we discuss how to pass between handle decompositions and trisections of four-manifolds using \emph{Heegaard-Kirby diagrams}.

\begin{definition}\label{def:Heeg-Kirb}

Suppose $L$ is a $c$--component integer-framed link in $\#^n(S^1\times S^2)$ such that Dehn surgery along the framing of $L$ yields $\#^m(S^1\times S^2)$.  Suppose $(\Sigma,H_1,H_2)$ is a genus $g$ Heegaard splitting of $E(L,\#^n(S^1\times S^2))$ such that $H_1$ is a handlebody.  Then the pair $\Ll=(L,\Sigma)$ is said to be a $(g;n,c,m)$--\emph{Heegaard-Kirby diagram}. 

\end{definition}

We will regard two Heegaard-Kirby diagrams in $\#^n(S^1\times S^2)$ as equivalent if there is a diffeomorphism of $\#^n(S^1\times S^2)$ which preserves the corresponding links (with framings) and Heegaard surfaces.  The framed link $L$ of a $(g;n,c,m)$--Heegaard-Kirby diagram determines a handle decomposition of a closed four-mani\-fold $X$ which consists of a single $0$--handle, $n$ $1$--handles, $c$ $2$--handles, $m$ 3--handles, and a single $4$--handle.  The Heegaard splitting $(\Sigma,H_1,H_2)$ of Definition \ref{def:Heeg-Kirb} then determines a way to carve out a trisection from this handle decomposition.  The proof of the following proposition is contained in Lemma 14 of \cite{gay-kirby:trisections}, which uses different terminology.

\begin{proposition}\label{thm:HeegKirb1}
	Every $(g;n,c,m)$--Heegaard-Kirby diagram $\Ll$ corresponding to a four-manifold $X$ determines a unique $(g;n,g-c,m)$--trisection $\T(\Ll)$ of $X$ up to diffeomorphism. 
\end{proposition}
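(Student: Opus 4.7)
The plan is to build the trisection $\T(\Ll)$ directly from the handle decomposition of $X$ prescribed by $L$, using the Heegaard surface $\Sigma$ to cleave the $2$--handle cobordism into compatible pieces. Write $X = Y_0 \cup Y_2 \cup Y_3$, where $Y_0$ is the union of the $0$-- and $1$--handles (a four-dimensional $1$--handlebody of genus $n$ with $\partial Y_0 = \#^n(S^1 \times S^2)$), $Y_2 = \partial Y_0 \times [0,1] \cup (\text{$2$--handles attached along $L \times \{1\}$})$, and $Y_3$ is the union of the $3$-- and $4$--handles (a four-dimensional $1$--handlebody of genus $m$ with $\partial Y_3 = \#^m(S^1 \times S^2)$). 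Setting $H_2' = H_2 \cup N(L)$, the triple $(\Sigma, H_1, H_2')$ is a genus-$g$ Heegaard splitting of $\partial Y_0$; the framed surgery converts it into a splitting $(\Sigma, H_1, H_2'')$ of $\partial Y_3$, where $H_2''$ is the image of $H_2'$ after the surgery on $L$.

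Using the Heegaard splitting to decompose the product region $\partial Y_0 \times [0,1] \subset Y_2$ as $(H_1 \times [0,1]) \cup_\Sigma (H_2' \times [0,1])$, I would define
\[
X_1 = Y_0 \cup (H_1 \times [0, \tfrac{1}{2}]), \qquad X_2 = (H_2' \times [0, 1]) \cup (\text{$2$--handles}), \qquad X_3 = Y_3 \cup (H_1 \times [\tfrac{1}{2}, 1])^{*},
\]
where $(\cdot)^{*}$ denotes the dual collar of $H_1 \subset \partial Y_3$ pushed back into $Y_2$ via the surgery cobordism. Then $X_1$ is a four-dimensional $1$--handlebody of genus $n$ (since it deformation retracts onto $Y_0$), and similarly $X_3$ is one of genus $m$, while $\Sigma \times \{\tfrac{1}{2}\}$ will serve as the trisection surface. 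Direct inspection of the collars shows $X_1 \cap X_3 \cong H_1$, $X_1 \cap X_2 \cong H_2'$, and $X_2 \cap X_3 \cong H_2''$, each a three-dimensional handlebody of genus $g$.

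The main technical step is verifying that $X_2$ is a four-dimensional $1$--handlebody of genus $g - c$. An Euler characteristic count gives $\chi(X_2) = (1 - g) + c = 1 - (g - c)$, as required. Since $\partial X_2 \cong H_2' \cup_\Sigma H_2''$ (the intermediate product region is absorbed), I plan to produce a $(g, g-c)$--standard Heegaard diagram for $\partial X_2$. Choose a defining disk system for $H_2'$ consisting of $c$ disks parallel to meridian disks of the link components (pushed through $H_2$ to have boundary on $\Sigma$) together with $g - c$ compressing disks of $H_2$ disjoint from $N(L)$; since the surgery only modifies $N(L)$, these latter $g - c$ disks also lie in $H_2''$ and complete to a defining system there by adjoining $c$ dual surgery-slope disks, each intersecting the corresponding meridian once on $\Sigma$. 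The resulting diagram is $(g, g-c)$--standard by construction, so $\partial X_2 \cong \#^{g-c}(S^1 \times S^2)$, and then by Laudenbach--Poenaru \cite{laudenbach-poenaru}, $X_2$ is uniquely the four-dimensional $1$--handlebody of genus $g - c$. Uniqueness of $\T(\Ll)$ up to diffeomorphism follows because the collars and product identifications used in the construction are unique up to ambient isotopy. The main obstacle will be rigorously establishing the existence of the compatible defining disk systems and verifying that their push-offs to $\Sigma$ produce precisely the required pattern of intersections.
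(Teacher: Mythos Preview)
Your construction of the three pieces $X_1,X_2,X_3$ and the verification of the pairwise intersections is essentially the same as the paper's, but your argument that $X_2$ is a four-dimensional $1$--handlebody contains a genuine gap.

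The step ``$\partial X_2 \cong \#^{g-c}(S^1\times S^2)$, hence by Laudenbach--Po\'enaru $X_2$ is the $1$--handlebody of genus $g-c$'' is a misapplication of that theorem.  Laudenbach--Po\'enaru asserts that every self-diffeomorphism of $\#^k(S^1\times S^2)$ extends over $\natural^k(S^1\times B^3)$; it does \emph{not} say that every compact $4$--manifold with that boundary (or even every such manifold with the correct Euler characteristic, or built from $0$--, $1$-- and $2$--handles) is diffeomorphic to $\natural^k(S^1\times B^3)$.  For example, the exterior of any nontrivial $2$--knot in $S^4$ has boundary $S^1\times S^2$ but is not $S^1\times B^3$.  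So knowing $\chi(X_2)$ and $\partial X_2$ is not enough.

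The fix is immediate, and in fact you have already written down the relevant ingredient.  Your defining disk system for $H_2'$ contains $c$ meridian disks of the components of $L$.  Each component $L_i$ meets its meridian disk exactly once; hence in $H_2'\times[0,1]\cong\natural^g(S^1\times B^3)$ the $c$ attached $2$--handles geometrically cancel $c$ of the $1$--handles.  This gives $X_2\cong\natural^{g-c}(S^1\times B^3)$ directly, with no appeal to Laudenbach--Po\'enaru.  This is exactly the argument the paper gives: ``$H_\beta\times[0,1]\cong\natural^g(S^1\times B^3)$ and the $2$--handles attached along $L$ are dual to $c$ of the $1$--handles.''  Replace your boundary-plus-L--P paragraph with this one-line handle cancellation and the proof is complete.
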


\begin{proof}
	Let $\Ll=(L,\Sigma)$ be a $(g;n,c,m)$--Heegaard-Kirby diagram for $X$, and let $Y'\cong\#^m(S^1\times S^2)$ denote the result of the integral surgery on $Y\cong\#^n(S^1\times S^2)$ along $L$.  Let $X_1$ and $X_3$ denote the unions of the 0--handle and the 1--handles and the 3--handles and the 4--handles, respectively.  Then, $\partial X_1 = Y$, and $\partial X_3 = Y'$.
	
	Now, let $(\Sigma,H_\alpha,H_\beta)$ be the genus $g$ Heegaard splitting of $Y$ induced by $\Sigma$, and suppose that $(\Sigma,H_\alpha, E(L,H_\beta))$ is the corresponding Heegaard splitting of $E(L,Y)$.  The result of surgering $H_\beta$ along $L$ is a new handlebody, which we denote $H_\gamma$.  Let $X_2$ be the trace of this surgery.  In other words, $X_2$ is the result of attaching $c$ 2--handles to $H_\beta\times[0,1]$ along $L\times \{1\}$.  It follows that $X_2\cong \natural^{g-c}(S^1\times B^3)$, since $H_\beta\times[0,1]\cong\natural^g(S^1\times B^3)$ and since the 2--handles attached along $L$ are dual to $c$ of the 1--handles in $\natural^g(S^1\times B^3)$.
	
	Thus, $X_1$, $X_2$, and $X_3$ are all four-dimensional 1--handlebodies of genus $n$, $g-c$, and $m$, respectively.  By construction, $X_1\cap X_2 = H_\beta$, $X_2\cap X_3 = H_\gamma$, and $X_3\cap X_1 = H_\alpha$ are all three-dimensional handlebodies of genus $g$ whose common boundary is $\Sigma$.  Therefore, $\T(\Ll)=(\Sigma,X_1,X_2,X_3)$ is a $(g;n,g-c,m)$--trisection of $X$.
\end{proof}

In \cite{gay-kirby:trisections}, Gay and Kirby use Proposition \ref{thm:HeegKirb1} to give one proof of Theorem \ref{thm:triext}.

\begin{remark}
	The roles played by $H_\alpha$, $H_\beta$, and $H_\gamma$ can be interchanged due to the inherent symmetry of trisections.  We use this fact to prove the main theorem, and this symmetry is a major strength of viewing four-manifolds through the lens of trisections.  For the sake of consistency, however, we will stick with the chosen designations throughout.
\end{remark}

Now, every curve on $\Sigma$ which bounds a disk in $H_2 = E(L,H_\beta)$ bounds a disk in both $H_\beta$ and $H_\gamma$.  This observation, in conjunction with Propositions \ref{prop:red_diag} and \ref{prop:stab_diag}, allows us to immediately deduce the following easy but powerful lemma.

\begin{lemma}\label{lem:Heeg-Kirb}
Let $\Ll=(L,\Sigma)$ be a Heegaard-Kirby diagram for $X$.  Let $\Hh=(\Sigma,H_1,H_2)$ be a Heegaard splitting of $E(L)$, and suppose that $\Ll$ induces a trisection $\T(\Ll)$ of $X$.

\begin{itemize}
	\item If $\Hh$ is reducible, then $\T$ is reducible.
	\item If $\Hh$ is stabilized, then $\T$ is $2$--stabilized.
\end{itemize}
\end{lemma}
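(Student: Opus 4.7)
The plan is to leverage the observation recorded immediately before the lemma statement: any simple closed curve on $\Sigma$ bounding a disk in $H_2=E(L,H_\beta)$ automatically bounds a disk in $H_\beta$ (since such a disk sits inside $H_\beta$) and a disk in $H_\gamma$ (since such a disk is disjoint from $L$, so it survives the surgery along $L$ that turns $H_\beta$ into $H_\gamma$). With this observation in hand, and using the identification $H_1=H_\alpha$ coming from the construction in the proof of Proposition \ref{thm:HeegKirb1}, both bullets fall out of the diagrammatic criteria of Propositions \ref{prop:red_diag} and \ref{prop:stab_diag}.

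For the first bullet, I would unpack the definition of reducibility for $\Hh$ to extract a curve $\delta \subset \Sigma$ bounding disks in both $H_1=H_\alpha$ and $H_2$. The observation then upgrades the disk in $H_2$ to disks in $H_\beta$ and $H_\gamma$, so $\delta$ simultaneously bounds disks in all three spine handlebodies of $\T$. Thus $\delta$ is a reducing curve, and $\T$ is reducible by definition (compare Proposition \ref{prop:red_diag}).

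For the second bullet, I would use the standard cancelling-pair characterization of a stabilized Heegaard splitting to produce curves $\omega,\gamma'\subset\Sigma$ with $\omega$ bounding a disk in $H_1=H_\alpha$, $\gamma'$ bounding a disk in $H_2$, and $|\omega\cap\gamma'|=1$. Applying the observation to $\gamma'$, I get that $\gamma'$ bounds disks in both $H_\beta$ and $H_\gamma$. This is exactly the hypothesis of the $2$--stabilization analog of Proposition \ref{prop:stab_diag}: a curve bounding disks in the two handlebodies meeting $X_2$ (namely $H_\beta$ and $H_\gamma$), crossed once by a curve bounding a disk in the remaining handlebody $H_\alpha$. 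That analog then concludes that $\T$ is $2$--stabilized.

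The only point requiring care is the stabilization index. Because the curve $\gamma'$ produced from $\Hh$ lands in the pair $(H_\beta, H_\gamma)$ rather than $(H_\alpha, H_\beta)$ or $(H_\gamma, H_\alpha)$, the resulting stabilization affects the handlebody $X_2$ specifically, giving a $2$--stabilization and not a $1$-- or $3$--stabilization. Apart from this bookkeeping check, no serious technical obstacle is expected; the proof is essentially a one-line translation via the key observation.
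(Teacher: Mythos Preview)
Your proposal is correct and is essentially the paper's own argument spelled out in detail: the paper does not give a separate proof but simply remarks that the observation about disks in $H_2=E(L,H_\beta)$ bounding in both $H_\beta$ and $H_\gamma$, together with Propositions~\ref{prop:red_diag} and~\ref{prop:stab_diag}, immediately yields the lemma. Your bookkeeping on the stabilization index is also right, since $H_\beta$ and $H_\gamma$ are precisely the handlebodies in $\partial X_2$.
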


On the other hand, given a trisection $\T=(\Sigma,X_1,X_2,X_3)$ of $X$, we can find many Heegaard-Kirby diagrams $(L,\Sigma)$ which induce $\T$. As we will see, each such diagram arises from a special choice of disks in $H_\beta$, which amounts to changing our framed link description of $X$.

\begin{definition}
Let $(\Sigma,H,H')$ be a genus $g$ Heegaard splitting for a three-manifold $Y$.  A disk $D$ properly embedded in $H$ is \emph{primitive in $H$ with respect to $H'$} if there is a disk $D'$ properly embedded in $H'$ such that $|D\cap D'|=1$. If $Y\cong\#^k(S^1\times S^2)$, then a \emph{complete collection of primitive disks} for $H$ is any disjoint union $D_1\cup\cdots \cup D_{g-k}$ of nonisotopic disks which are primitive in $H$ with respect to $H'$.
\end{definition}

Note that Theorem \ref{thm:waldhausen} implies that for every Heegaard splitting $(\Sigma,H,H')$ for $\#^k(S^1 \times S^2)$, there is a complete collection of primitive disks for $H$.

Just as a Heegaard splitting has many Heegaard diagrams, a trisection has many Heegaard-Kirby diagrams.  Given a $(g;k_1,k_2,k_3)$--trisection $(\Sigma,X_1,X_2,X_3)$ of $X$ with spine $H_\alpha\cup H_\beta\cup H_\gamma$, let $L=L_1\cup \cdots L_{g-k_2}$ be the boundary of a complete collection of primitive disks for $H_\gamma$ with respect to $H_\beta$. Then after pushing $L$ into $H_\beta$ and giving it the surface framing induced by $\Sigma$, we obtain a $(g;k_1,g-k_2,k_3)$--Heegaard-Kirby diagram $\Ll(\D)=(L,\Sigma)$, where $(H_\alpha, E(L,H_\beta),\Sigma)$ is a Heegaard splitting of $E(L,\partial X_1)$. A key fact to observe is that the primitivity of $L$ with respect to $H_\beta$ ensures that $E(L,H_\beta)$ will be a compression body.  This gives the following lemma.

\begin{lemma}\label{lem:Heeg-Kirb2}
For any $(g;k_1,k_2,k_3)$--trisection $\T=(\Sigma,X_1,X_2,X_3)$ of $X$ and complete collection $\D$ of disks in $H_\gamma$ that are primitive with respect to $H_\beta$, we have $\T(\Ll(\D))=\T$, where $\T(\Ll(\D))$ is the trisection induced by the Heegaard-Kirby diagram $\Ll(\D)$. 
\end{lemma}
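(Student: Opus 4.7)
The plan is to verify that the construction of $\T(\Ll(\D))$ in Proposition~\ref{thm:HeegKirb1} recovers each piece of $\T$. By the definition of $\Ll(\D) = (L, \Sigma)$, the Heegaard splitting of $\partial X_1 \cong \#^{k_1}(S^1 \times S^2)$ entering the construction is literally the splitting $(\Sigma, H_\alpha, H_\beta)$ inherited from $\T$. The Laudenbach--Poenaru uniqueness of 4-dimensional 1-handlebodies with prescribed boundary therefore identifies $X_1' = X_1$, and by the same principle applied on the $X_3$ side (once we verify that the surgery on $L$ produces $\partial X_3 \cong \#^{k_3}(S^1 \times S^2)$), one obtains $X_3' = X_3$. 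The lemma thus reduces to showing $X_2' = X_2$, or equivalently $H_\gamma' = H_\gamma$.

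To identify $X_2' = X_2$, I would exhibit a relative handle decomposition of $X_2$ that matches the construction of $X_2'$ in Proposition~\ref{thm:HeegKirb1}, namely $X_2' = (H_\beta \times I) \cup (\text{2-handles along } L \text{ with surface framing})$. For this, push each $D_i \in \D$ slightly off of $H_\gamma \subset \partial X_2$ into the interior of $X_2$ to obtain a properly embedded disk $\tilde D_i$ with $\partial \tilde D_i = L_i \subset \Sigma$. Each $N(\tilde D_i, X_2) \cong D^2 \times D^2$ is then a 2-handle attached along $L_i$, and since $\tilde D_i$ is obtained from $D_i$ by translating through a collar of $\Sigma$ in $X_2$, the induced framing coincides with the surface framing.

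The crux is to show that $X_2 \setminus N(\tilde \D)$ is a collar $H_\beta \times I$. This is where the primitivity of $\D$ is used: after suitable handleslides, the dual disks $\D^* \subset H_\beta$ can be arranged to be pairwise disjoint with $|\partial D_i \cap \partial D_j^*| = \delta_{ij}$, and then identified with cocores of $g-k_2$ of the 1-handles in some handle decomposition of $H_\beta$. These lift to cocores of $g-k_2$ of the 1-handles of $H_\beta \times I \subset X_2$, and the primitivity together with the surface framing is precisely the condition for each 2-handle $N(\tilde D_i)$ to form a cancelling pair with the corresponding 1-handle. Cancelling all $g - k_2$ such pairs leaves $X_2 \setminus N(\tilde \D) \cong H_\beta \times I$, giving $X_2 = (H_\beta \times I) \cup N(\tilde\D)$ as required; this matches the description of $X_2'$, and hence $H_\gamma' = X_2 \cap X_3 = H_\gamma$.

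The main obstacle is the disk-replacement argument in the previous paragraph, used to arrange the duals $\D^*$ pairwise disjointly and as part of a cutting system for $H_\beta$, so that the individual primitivity of each $D_i$ can be promoted to the simultaneous handle-cancellation picture. This is the one place where the ``complete collection'' structure of $\D$ is essential beyond the individual primitivity of each disk, and it is a standard but nontrivial application of innermost-disk/handleslide techniques in the genus-$g$ handlebody $H_\beta$.
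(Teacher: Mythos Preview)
Your argument is correct and supplies far more detail than the paper, which offers no separate proof at all: the lemma is stated immediately after the construction of $\Ll(\D)$ with only the phrase ``This gives the following lemma.'' Your approach---Laudenbach--Po\'enaru for $X_1'$ and $X_3'$, together with an explicit relative handle decomposition of $X_2$ built from the pushed-in disks $\tilde\D$---is the natural way to unpack the paper's implicit claim that the two constructions are inverse to one another, and the ``main obstacle'' you flag (arranging pairwise disjoint duals $\D^*$ with $|\partial D_i^* \cap L_j|=\delta_{ij}$) is exactly the content of the paper's one unproved assertion that primitivity of $L$ with respect to $H_\beta$ forces $E(L,H_\beta)$ to be a compression body.
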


%%%%%%%%%%%%%%%%%%%%%%%%%%%%%%%%%%%%%%%%%%%%%%%%%%%%%%%%%%%
%%%%%%%%%%%%%%%%%%%%%%%%%%%%%%%%%%%%%%%%%%%%%%%%%%%%%%%%%%%
\section{Proof of the main theorem}\label{sec:proof}
%%%%%%%%%%%%%%%%%%%%%%%%%%%%%%%%%%%%%%%%%%%%%%%%%%%%%%%%%%%
%%%%%%%%%%%%%%%%%%%%%%%%%%%%%%%%%%%%%%%%%%%%%%%%%%%%%%%%%%%

An important ingredient in the proof of the main theorem is the following classification of Dehn surgeries between connected sums of $S^1\times S^2$.  This result is implicit in the more general Theorem 6.9 of \cite{gordon:combinatorial}, where the pertinent case is attributed to \cite{gabai:II} and \cite{scharlemann:reducible}.  We sketch a proof for convenience.

\begin{theorem}\label{thm:Dehn}
	Suppose that $K$ is a knot in $\#^k(S^1\times S^2)$ and that $K$ admits a non-trivial Dehn surgery to $\#^{k'}(S^1\times S^2)$, where $k' \geq k$.  Then $k' \in \{k,k+1\}$, $K$ is unknotted and contained in a ball, and the surgery slope is $\pm1/m$ for $m\in\Z\setminus\{0\}$ or zero, respectively.
\end{theorem}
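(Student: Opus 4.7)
The plan is to reduce the theorem to classical statements about Dehn surgery on knots in $S^3$ by first establishing that $K$ must be contained in a 3-ball inside $\#^k(S^1\times S^2)$. Once this structural reduction is in hand, the surgery becomes a local operation that commutes with the ambient prime decomposition, and the classification follows from Gabai's Property R theorem and the Gordon-Luecke knot complement theorem.

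The crux of the argument is the reduction step, and I would invoke the structure theory for reducible Dehn surgeries developed in \cite{gabai:II} and \cite{scharlemann:reducible} and synthesized in Theorem 6.9 of \cite{gordon:combinatorial}. Since $\#^{k'}(S^1\times S^2)$ is either $S^3$ (when $k'=0$) or non-trivially reducible (when $k'\geq 1$), and likewise for the ambient manifold, these results sharply constrain how $K$ can be embedded. The assumption $k'\geq k$ is crucial: it prevents the surgery from collapsing any $S^1\times S^2$ summand, and combined with the Gabai-Scharlemann machinery this rules out any embedding of $K$ that winds essentially through one of the summands, forcing $K$ into a 3-ball. I expect this to be the main obstacle; once it is overcome, the remaining steps are essentially formal.

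Once $K\subset B$ for some 3-ball $B\subset \#^k(S^1\times S^2)$, the surgery splits along the essential $2$-sphere $\partial B$ as a connected sum
\[
    \#^k(S^1\times S^2)_K(r) \;\cong\; \#^k(S^1\times S^2) \,\#\, \widehat{K}(r),
\]
where $\widehat{K}\subset S^3$ is the image of $K$ under the inclusion $B\hookrightarrow S^3$. Setting this equal to $\#^{k'}(S^1\times S^2)$ and applying Kneser-Milnor uniqueness yields $\widehat{K}(r)\cong \#^{k'-k}(S^1\times S^2)$. Since the exterior of a knot in $S^3$ has $H_1\cong \Z$, every Dehn filling has $H_1$ of rank at most one, forcing $k'-k\leq 1$. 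If $k'=k+1$, then $\widehat{K}(r)\cong S^1\times S^2$, and Gabai's Property R theorem \cite{gabai:II} forces $\widehat{K}$ to be the unknot with $r=0$. If $k'=k$, then $\widehat{K}(r)\cong S^3$ non-trivially, and a standard consequence of the Gordon-Luecke knot complement theorem—combined with the fact that non-trivial surgeries on the unknot yield $S^3$ only at slopes $\pm 1/m$—shows that $\widehat{K}$ is the unknot with $r=\pm 1/m$ for some $m\in\Z\setminus\{0\}$.
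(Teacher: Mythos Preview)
Your endgame is correct: once $K$ is known to lie in a $3$-ball, the connected-sum splitting and the appeal to Property~R (for $k'=k+1$) or Gordon--Luecke (for $k'=k$) finish the argument cleanly. The gap is exactly where you flag it. You assert that the Gabai--Scharlemann theory forces $K$ into a ball, but you give no argument; saying that the hypothesis $k'\geq k$ ``rules out any embedding of $K$ that winds essentially through one of the summands'' is the desired conclusion restated, not a proof. Invoking Theorem~6.9 of \cite{gordon:combinatorial} as a black box would give the entire statement at once and make the rest of your write-up redundant, so presumably you mean to extract the reduction from the underlying results---and that extraction is what is missing.

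The paper supplies it via a short induction on $k$ in the case $k'=k+1$. If $E(K)$ contains an essential sphere, surger along it to drop $k$ by one and apply the inductive hypothesis; if instead $E(K)$ is irreducible and $\partial$-irreducible, the main theorem of \cite{scharlemann:reducible} forces the ambient manifold to be irreducible, i.e.\ $k=0$, reducing to Property~R in $S^3$. This dichotomy is the concrete mechanism your proposal lacks. For the case $k'=k$ the paper bypasses the ball reduction entirely and instead cites Gabai's result \cite{gabai:II} that knots in $\#^k(S^1\times S^2)$ are determined by their complements; your route via Gordon--Luecke would also work, but only after the reduction step you have not supplied. One small bibliographic point: Property~R is \cite{gabai:III}, not \cite{gabai:II}.
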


Note that the theorem implies that if $K$ is a knot in $\#^k(S^1\times S^2)$ with a surgery to $\#^{k'}(S^1\times S^2)$, then $k'\in\{k-1,k,k+1\}$, since the case $k'<k$ is just the dual surgery picture to the case $k<k'$.  It follows that the above theorem completely classifies such surgeries.

\begin{proof}
	Knots in $\#^k(S^1\times S^2)$ are determined by their complements \cite{gabai:II}, so the result follows in the case that surgery on $K$ yields $\#^k(S^1\times S^2)$.
	
	Suppose instead that $K$ has a surgery to $\#^{k'}(S^1\times S^2)$, where $k' > k$.  Since $K$ is a knot, surgery on $K$ can increase the rank of the first homology by at most one, and thus $k' = k+1$.  We proceed by induction.  The case when $k=0$ is the Property R Conjecture, which was settled by Gabai \cite{gabai:III}.  Assume that the theorem is true for any $k<n$, and suppose that $K$ is a knot in $\#^n(S^1\times S^2)$ with a surgery to $\#^{n+1}(S^1\times S^2)$.  Because the rank of the first homology is increasing from $n$ to $n+1$, $K$ must be null-homologous.
	
	Suppose that $E(K)$ contains a reducing sphere $P$.  Then, by surgering $P$, we reduce the set-up to a surgery on $K$ in $\#^{n-1}(S^1\times S^2)$, so we are done by our inductive hypothesis. If $E(K)$ is $\partial$--reducible, the $E(K)$ is reducible; surgering $\partial E(K)$ along the reducing disk gives an essential sphere in $E(K)$.
	
	It follows that $E(K)$ is irreducible and $\partial$--irreducible. But now we satisfy the hypotheses of the main result of \cite{scharlemann:reducible}, which tells us that $n=0$, completing the proof.
\end{proof}

We are now prepared to prove our main result.

\begin{reptheorem}{thm:class}
	Suppose that $X$ admits a $(g;k_1,k_2,k_3)$--trisection $\T$ with $k_1\geq g-1$, and let $k' = \max\{k_2,k_3\}$.  Then, $X$ is diffeomorphic either to $\#^{k'}(S^1\times S^3)$ or to the connected sum of $\#^{k'}S^1\times S^3$ with one of $\CP^2$ or $\overline\CP^2$, and $\T$ is the connected sum of genus one trisections.
\end{reptheorem}

\begin{proof}

The case $k_1=g$ is trivial, since it implies $X$ has a handle decomposition without 2--handles and must be the connected sum of $k_2 = k_3$ copies of $S^1 \times S^3$.

For the case $k_1=g-1$, we proceed by induction on $g$.  If $g=1$, the statement follows from the classification of genus one trisections.  Suppose that the theorem holds whenever $g'<g$.  It suffices to show that $\T$ is reducible, because this implies that $\T$ can be written as a connected sum of a $(g';k_1',k_2',k_3')$--trisection $\T'$ and a $(g'';k_1'',k_2'',k_3'')$--trisection $\T''$, where $g = g' + g''$ and $g-1 = k_1 = k_1' + k_1''$.  By the inductive hypothesis, both $\T'$ and $\T''$ satisfy the conclusions of the theorem.

To match the conventions in Section \ref{sec:Heeg-Kirb}, regard $\T$ as a $(g;k_2,g-1,k_3)$--trisection from now on. Given such a trisection, a complete collection of primitive disks for $H_\gamma$ with respect to $H_\beta$ will consist of a single disk, and will thus induce a Heegaard-Kirby diagram $(L,\Sigma)$ in $\#^{k_2}(S^1\times S^2)$ in which $L$ has a single component.  If necessary, permute $X_2$ and $X_3$ so that $k_3 \geq k_2$.  By Theorem \ref{thm:Dehn}, the knot $L$ is an \emph{unknot} in $\#^{k_2}(S^1\times S^2)$ and $k_3 \in \{k_2,k_2+1\}$.  

Therefore, we have $E(L,\#^{k_2}(S^1\times S^2))\cong V\# (\#^{k_2}(S^1\times S^2))$, where $V$ denotes the solid torus.  Hence, if $k_2>0$, then $E(L,\#^{k_2}(S^1\times S^2))$ is reducible.  In the case $k_2=0$, $E(L)$ is a solid torus.  It is a standard theorem (see \cite{lei:stability} for a reference) that any genus $g\geq 2$ Heegaard splitting of the solid torus is reducible (in fact stabilized).  In any case, the splitting of $E(L)$ is reducible, and thus by Lemmas \ref{lem:Heeg-Kirb} and \ref{lem:Heeg-Kirb2}, the trisection $\T$ is reducible as well.

\end{proof}

\begin{remark}\label{rmk:parameters}
	
Notice that if $\T$ is a $(g;g,k_2,k_3)$--trisection, then $k_2 = k_3$.  Similarly, if $\T$ is a $(g;g-1,k_2,k_3)$--trisection, then we have that $k_3\in\{k_2-1,k_2,k_2+1\}$.  In addition, if a manifold $X$ with a $(g;g-1,k_2,k_3)$--trisection $\T$ has a $\CP^2$ or $\overline{\CP}^2$ summand, then $\T$ may be expressed as the connected sum of the standard $(1;0,0,0)$--trisection and a $(g-1;g-1,k_2,k_3)$--trisection; thus, $\T$ must have parameters $(g;g-1,k_3,k_3)$.
\end{remark}

%%%%%%%%%%%%%%%%%%%%%%%%%%%%%%%%%%%%%%%%%%%%%%%%%%%%%%%%%%%%%%%%%%%%%
%%%%%%%%%%%%%%%%%%%%%%%%%%%%%%%%%%%%%%%%%%%%%%%%%%%%%%%%%%%%%%%%%%%%%
\section{Applications to the Generalized Property R Conjecture}\label{sec:propR}
%%%%%%%%%%%%%%%%%%%%%%%%%%%%%%%%%%%%%%%%%%%%%%%%%%%%%%%%%%%%%%%%%%%%%
%%%%%%%%%%%%%%%%%%%%%%%%%%%%%%%%%%%%%%%%%%%%%%%%%%%%%%%%%%%%%%%%%%%%%

In this section, we prove Corollary \ref{coro:links}, and we discuss the applications of our results to the Generalized Property R Conjecture and related problems, including the search for non-standard trisections of $S^4$.  

Let $L$ be a $c$--component link in a closed three-manifold $Y$.  A \emph{tunnel system} $\tau$ for $L$ is a collection of arcs $t_1,\ldots,t_n$ that are embedded in $Y$ such that $t_i\cap L=\partial t_i\cap L$ for all $i$.  A tunnel system is called \emph{complete} if $E(L\cup\tau,Y)$ is a handlebody. The minimum number of arcs in a complete tunnel system for $L$ is called the \emph{tunnel number} of $L$, and is denoted $\tau(L)$.  Note that $L$ admits a complete tunnel system with $n$ tunnels if and only if $E(L)$ has a Heegaard splitting $(\Sigma,H_1,H_2)$ of genus $n+1$ such that $H_1$ is a handlebody.

The proof of Theorem \ref{thm:class} above relies heavily on the Dehn surgery classification given by Theorem \ref{thm:Dehn}.  Using the symmetry inherent to trisections, we may now take Theorem \ref{thm:class} and extract a new surgery result, stated in the following corollary.

\begin{repcorollary}{coro:links}
	Suppose that $L$ is a $c$--component link in $\#^k(S^1\times S^2)$ with an integral framed surgery to $\#^{c+k}(S^1\times S^2)$.
\begin{enumerate}
\item If $L$ has tunnel number $c+k-1$, then $L$ is a $c$--component 0--framed unlink.
\item If $L$ has tunnel number $c+k$, then there is a sequence of handleslides taking the split union of $L$ with a $(k+1)$--component 0--framed unlink to a $(c+k+1)$--component 0--framed unlink.
\end{enumerate}
\end{repcorollary}

\begin{proof}
	We may assume that $c \geq 2$, since the case $c=1$ is covered by Theorem \ref{thm:Dehn}.  Let $\tau= t_1 \cup \cdots\cup t_n$ be a complete tunnel system for $L$, and let $X$ be the four-manifold constructed by capping off the boundary components of the trace of the surgery on $L$ with four-dimensional 1--handlebodies.  First, if $n = c+k-1$, then $E(L,\#^k(S^1 \times S^2))$ has a genus $c+k$ Heegaard splitting $\Sigma$, and $(\Sigma,L)$ is a $(c+k;k,c,c+k)$--Heegaard-Kirby diagram for $X$ inducing a $(c+k;k,k,c+k)$--trisection $\T$.  By the correspondence in Section \ref{sec:Heeg-Kirb}, we know that $L$ lies on the trisection surface $\Sigma$ and bounds disks in $H_\gamma$.  However, since $H_{\alpha} \cup H_{\gamma}$ is a genus $(c+k)$ Heegaard splitting for $\#^{c+k}(S^1 \times S^2)$, every curve bounding a disk in $H_{\gamma}$ also bounds a disk in $H_{\alpha}$.  We conclude that $L$ is a $c$--component unlink in $\#^k(S^1 \times S^2) = H_{\alpha} \cup H_{\beta}$.
	
	If $n = c+k$, then there is a $(c+k+1;k,c,c+k)$--Heegaard-Kirby diagram $(\Sigma,L)$ for $X$ which yields a $(c+k+1;k,k+1,c+k)$--trisection $\T$.  By Theorem \ref{thm:class}, the trisection $\T$ may be written as a connected sum of genus one trisections, and by Remark \ref{rmk:parameters}, $X$ does not have a $\CP^2$ or $\overline{\CP}^2$ summand.  It follows that each summand of $\T$ is a genus one diagram for $S^4$ or $S^1 \times S^3$, and $\T$ has a diagram $(\Sigma,\alpha',\beta',\gamma')$ such that each curve in $\gamma'$ bounds a disk in at least one of $H_{\alpha}$ or $H_{\beta}$.  This implies that $\gamma'$ is a $(c+k+1)$--component 0--framed unlink in $\#^k(S^1 \times S^2) = H_{\alpha} \cup H_{\beta}$.  Let $\D_{\gamma'}$ denote the collection of disks in $H_{\gamma}$ bounded by $\gamma'$.
	
	As above, the link $L$ bounds a complete collection $\D$ of primitive disks in $H_\gamma$ with respect to $H_\beta$.  Since $\D$ is primitive with respect to $H_{\beta}$ and $(\Sigma,H_\beta,H_\gamma)$ is a Heegaard splitting of $\#^{k+1}(S^1\times S^2)$, we can extend $\D$ to a complete collection $\overline\D = \D \cup \D^*$ of disks for $H_{\gamma}$ such that the $k+1$ curves $U = \partial \D^*$ bound disks in both $H_{\beta}$ and $H_{\gamma}$.  It follows that $U$ is a $(k+1)$--component 0--framed unlink.  Finally, by \cite{johannson}, the collections $\overline\D$ and $\D_{\gamma'}$ of disks are related by a sequence of handleslides, and thus their boundaries, $L\cup U$ and $\gamma'$, are similarly related.
	
\end{proof}

We note that in the case that $k=0$ and $c=2$, conclusion (1) above has been established in \cite{gst} with a proof attributed to Alan Reid.  This proof can be extended to show (1) holds for $k=0$ and for any value of $c$.  The restriction to links in $S^3$ is of special importance, because it relates to the Generalized Property R Conjecture (GPRC). For an overview of recent developments in the GPRC, see \cite{gst} and \cite{williams:GPR}.  The strongest statement of the conjecture is likely false.

\begin{GPR}
	Suppose that $L$ is a $c$--compo\-nent link in $S^3$ with an integral surgery to $\#^c(S^1\times S^2)$.  Then there is a sequence of handleslides transforming $L$ into a $c$--component unlink.
\end{GPR}

The authors of \cite{gst} also present a weaker version of the conjecture.

\begin{wGPR}
		Suppose that $L$ is a $c$--component link in $S^3$ with an integral surgery to $\#^c(S^1\times S^2)$.  Then, for some integers $r$ and $s$, the union $L'$ of $L$ and a distant, zero-framed unlink of $r$ components and a collection or $s$ canceling Hopf pairs has the property that a sequence of handleslides transforms $L'$ into the split union of a $(c+r)$--component unlink and a collection of $s$ canceling Hopf pairs.
\end{wGPR}

We propose a third, related conjecture to serve as an intermediary between the GPRC and the Weak GPRC.  This new conjecture is more closely related to Corollary \ref{coro:links} and to trisections than the two conjectures appearing in \cite{gst}.

\begin{sGPR}
	Suppose that $L$ is a $c$--component link in $S^3$ with an integral surgery to $\#^c(S^1\times S^2)$.  Then, after possibly introducing a distant, zero-framed unlink $U$ of $r$ components, there is a sequence of handleslides transforming $L\cup U$ into the $(c+r)$--component unlink.
\end{sGPR}

Observe that Corollary \ref{coro:links} implies that $c$--component links with tunnel number at most $c$ satisfy the Stable GPRC.  Clearly, the GPRC implies the Stable GPRC, and the Stable GPRC implies the Weak GRPC.  In \cite{gst}, the GPRC for two-component links is discussed in detail, and an infinite family $\{L_{n,1}\}$ of likely counterexamples is given, along with an interesting relationship between the GPRC and the Andrews-Curtis Conjecture.  Roughly, the latter conjecture states that every balanced presentation of the trivial group can be changed to a trivial presentation via Andrews-Curtis moves, which include the inversion of a relator, replacing a relator with a product of two relators, and conjugating a relator by a generator.  If a presentation $P$ of the trivial group can be transformed to the trivial presentation in this way, then $P$ is \emph{Andrew-Curtis trivial}.  See \cite{andrews-curtis} for more details.

In \cite{gst}, the authors establish the following theorem.
\begin{theorem}\label{thm:AC}\cite{gst}
	If $L_{n,1}$ satisfies the Generalized Property R Conjecture, then the following presentation of the trivial group is Andrews-Curtis trivial:
	$$P_n = \langle x, y \,|\, yxy=xyx, x^{n+1}=y^n\rangle.$$
\end{theorem}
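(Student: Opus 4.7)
\textbf{Proof plan for Theorem \ref{thm:AC}.}

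The plan is to pass through the standard dictionary between Kirby calculus on a framed link description of a (homotopy) four-sphere and Andrews-Curtis moves on the associated presentation of the trivial group. The link $L_{n,1}$ is a two-component framed link in $S^3$ whose integral surgery yields $\#^2(S^1\times S^2)$, so the trace $W_n$ of this surgery is built from $B^4$ by attaching two 2-handles and has $\partial W_n \cong \#^2(S^1\times S^2)$. Capping off with a three-dimensional handlebody's worth of 3-handles and a 4-handle produces a closed four-manifold $X_n$ with trivial fundamental group. Turning $W_n$ upside down, one sees $X_n$ as built on $\#^2(S^1\times S^2)\times I$ by attaching two dual 2-handles, and reading off the attaching data gives a two-generator, two-relator presentation of $\pi_1(X_n)=1$.

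First, I would verify that for the specific link $L_{n,1}$ used in \cite{gst}, this dual presentation is exactly $P_n = \langle x,y \mid yxy=xyx,\ x^{n+1}=y^n\rangle$ (up to conjugation and inversion of relators, which are themselves Andrews-Curtis moves). This is a direct computation: the two components of $L_{n,1}$ trace out words in the free group $\pi_1(\#^2(S^1\times S^2))=F(x,y)$, and one checks these words are $yxyx^{-1}y^{-1}x^{-1}$ and $x^{n+1}y^{-n}$.

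Next, I would invoke the classical correspondence between Kirby moves and Andrews-Curtis moves: a handleslide of one 2-handle over another changes one relator by multiplying it by a conjugate of the other relator (or its inverse), and a blow-down/blow-up of an unknotted $\pm 1$-framed component that is geometrically split from the rest corresponds to a stabilization/destabilization — but in our setting, the GPRC for $L_{n,1}$ is precisely the statement that $L_{n,1}$ can be transformed by a sequence of handleslides alone (no stabilizations) into a two-component 0-framed unlink. Since the unlink corresponds to the trivial presentation $\langle x,y\mid x,y\rangle$, this sequence of handleslides induces a sequence of Andrews-Curtis moves taking $P_n$ to the trivial presentation, so $P_n$ is Andrews-Curtis trivial.

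The main obstacle is the bookkeeping in the first step: identifying precisely which relators arise from $L_{n,1}$ and matching them with $P_n$ after a base change of the free group $F(x,y)$ corresponding to the meridional generators of $\#^2(S^1\times S^2)$. Once this identification is in hand, the remainder of the argument is a mechanical translation through the Kirby-to-Andrews-Curtis dictionary, since the hypothesis of GPRC gives exactly the handleslide sequence needed, and no further handles (3-handles or 4-handles) enter the presentation.
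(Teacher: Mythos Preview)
The paper does not supply its own proof of this statement; Theorem~\ref{thm:AC} is quoted from \cite{gst} and used as a black box, so there is nothing in the paper to compare your argument against.

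That said, your outline is the standard argument and is essentially how the result is obtained in \cite{gst}: one turns the handle decomposition upside down to read off a balanced two-generator, two-relator presentation of $\pi_1(X_n)=1$, identifies it with $P_n$ via an explicit computation specific to the link $L_{n,1}$, and then uses the fact that a handleslide of one 2--handle over another replaces one relator by its product with a conjugate of the other (or its inverse), which is a composition of Andrews--Curtis moves. Your identification of the main obstacle is also accurate: the only non-formal step is the link-specific calculation matching the relators to $yxyx^{-1}y^{-1}x^{-1}$ and $x^{n+1}y^{-n}$, and that computation requires the explicit description of $L_{n,1}$ given in \cite{gst}, which is not reproduced in the present paper.
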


For $n\geq 3$, the presentations $P_n$ are believed to be Andrews-Curtis nontrivial. Although they conclude that the GPRC is likely false, the authors of \cite{gst} show that the links $\{L_{n,1}\}$ still satisfy the Weak GPRC.  Thus, the following question is of interest.

\begin{question}\label{stableln}
Do the links $\{L_{n,1}\}$ satisfy the Stable Generalized Property R Conjecture?
\end{question}

There is a well-known weakening of the Andrews-Curtis Conjecture, called the \emph{Stable Andrews-Curtis Conjecture}, which asserts that any balanced presentation of the trivial group may be reduced to the trivial presentation with Andrews-Curtis moves and a stabilization/destabilization move in which a trivial generator-relator pair is either added or deleted.  Although the stable version is typically distinguished from the original conjecture in group theory literature, the distinction is often blurred among topologists (see, for example, \cite{gompf-stipsicz}).  It is unknown whether the above presentations $P_n$ are stably Andrews-Curtis trivial.

Let $L$ be a link, and let $(\Sigma,L)$ be a Heegaard-Kirby diagram which gives rise to a trisection $\T$, with components labeled as above.  We leave it to the reader to verify that stabilizing $\Sigma$ and adding a distant 0--framed unknot to $L$ can be arranged to correspond to 3--stabilizing the trisection $\T$, stabilizing $\Sigma$ without modifying $L$ corresponds to 2--stabilizing $\T$ (Lemma \ref{lem:Heeg-Kirb}), and adding a canceling Hopf pair may be arranged to correspond to 1--stabilizing $\T$.  

For some $n\geq 3$, consider $L_{n,1}$, one of the proposed counterexamples, and let $\Sigma$ be a Heegaard surface for $E(L_{n,1})$.  Let $\T_n$ denote the trisection induced by $(L_{n,1}, \Sigma)$.  In \cite{gst}, the authors show that the link $L'_{n,1}$ consisting of $L_{n,1}$ and a single Hopf pair is handleslide equivalent to a 0--framed unlink and a single Hopf pair, and the four-manifold $X$ described by the surgery is a standard $S^4$.  This fact does not immediately imply that a 1--stabilization of $\T_n$ is trivial (since handleslides may not take place in the surface $\Sigma)$, but it is reasonable to believe that applying a 1--stabilization to $\T_n$ and then stabilizing a relatively small number (possibly zero) of additional times will give one of the standard trisections of $S^4$.

On the other hand, if $\T_n$ itself is standard, then it follows that there is an $r$--component 0--framed unlink $U_r$ such that $L_{n,1}\sqcup U_r$ is handle slide equivalent to a $(2+r)$--component unlink, so the links $L_{n,1}$ have the Stable Generalized Property R.  We conclude with the following.

\begin{proposition}
	If the trisection $\T_n$ is standard, then the presentation $P_n$ is stably Andrews-Curtis trivial.  More generally, if every trisection of the smooth four-sphere is standard, then the presentations $P_n$ are stably Andrews-Curtis trivial.
\end{proposition}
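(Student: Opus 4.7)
The plan is to deduce Stable Generalized Property R for $L_{n,1}$ from the standardness of $\T_n$, and then upgrade the argument of Theorem \ref{thm:AC} to its stable analogue.

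For the first step, I would closely mirror the proof of Corollary \ref{coro:links}(2). Let $g$ denote the genus of $\Sigma$, so that $(\Sigma, L_{n,1})$ induces a $(g; 0, g-2, 2)$-trisection $\T_n$. If $\T_n$ is standard, matching parameters with $\Ss^{k_1,k_2,k_3}$ forces $\T_n\cong \Ss^{0, g-2, 2}$, which by construction is a connected sum of $(g-2)$ genus-one $2$-stabilization summands and $2$ genus-one $3$-stabilization summands of $\Ss^0$; in particular no $1$-stabilization, $\CP^2$, or $\overline{\CP}^2$ summands appear. Consulting Figure \ref{fig:GenusOneStab}, the $\gamma$ curve of every such summand is parallel to either its $\alpha$ or its $\beta$ curve, so the full $\gamma'$ system of $\T_n$ is a $g$-component $0$-framed unlink in $\partial X_1 = S^3$. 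Next, let $\D$ be the $2$-disk complete primitive system of $H_\gamma$ (with respect to $H_\beta$) bounded by $L_{n,1}$, and extend it to a full complete disk system $\overline{\D} = \D \cup \D^*$ of $H_\gamma$ with $r := g-2$ additional disks whose boundaries $U_r$ also bound disks in $H_\beta$; these extra boundaries form an $r$-component $0$-framed unlink in $S^3$, corresponding at the trisection level to adjoining $r$ distant $0$-framed unknots, i.e., to $r$ $3$-stabilizations of $\T_n$. By \cite{johannson}, $\overline{\D}$ and the disk system $\D_{\gamma'}$ are related by a sequence of disk slides in $H_\gamma$, which translate via the surface framing on $\Sigma$ into Kirby handleslides taking $L_{n,1} \sqcup U_r$ to the $(2+r)$-component unlink $\gamma'$. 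This is precisely the Stable GPR for $L_{n,1}$.

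For the second step, I would revisit the argument of \cite{gst} behind Theorem \ref{thm:AC}. That argument converts Kirby handleslides on the surgery description $L_{n,1}$ of $S^4$ into Andrews--Curtis moves on $P_n$ via the \emph{dual} handle decomposition of $S^4$, in which the $1$- and $2$-handles yielding the generators and relators of $P_n$ arise as duals of the original $3$- and $2$-handles. Under the same dictionary, adjoining a distant $0$-framed unknot to $L_{n,1}$ corresponds dually to introducing a canceling $1$-/$2$-handle pair, which in presentation language is exactly the Andrews--Curtis stabilization move of adding a trivial generator-relator pair. Therefore the Stable GPR handleslide sequence from the first step translates directly into a stable Andrews--Curtis sequence reducing $P_n$ to the trivial presentation, so $P_n$ is stably Andrews--Curtis trivial. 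The second sentence of the proposition is immediate: if every trisection of $S^4$ is standard, then each $\T_n$ is standard for $n \geq 3$, and the first half gives stable Andrews--Curtis triviality of every $P_n$.

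The main obstacle I anticipate is in the choice of the extra disks $\D^*$: one must ensure that their boundaries $U_r$ genuinely form a $0$-framed unlink in $\partial X_1 = S^3$ rather than merely a reducing system in $\partial X_2 = \#^{g-2}(S^1 \times S^2)$, and verify that Johannson's disk slides in $H_\gamma$ translate faithfully into Kirby handleslides on $\Sigma$ (as opposed to only disk slides within the $\#^{g-2}(S^1\times S^2)$ picture). Both delicate points are already handled in the proof of Corollary \ref{coro:links}(2), so I would follow that argument essentially verbatim.
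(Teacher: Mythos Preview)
Your proposal is correct and mirrors the paper's own argument (given informally in the paragraph preceding the proposition): standardness of $\T_n$ yields Stable GPR for $L_{n,1}$ via the proof of Corollary~\ref{coro:links}(2), and the dictionary from \cite{gst} then converts this into stable Andrews--Curtis triviality of $P_n$. One small slip: adding the curves $U_r$ (already lying on $\Sigma$) to the link does not correspond to $3$--stabilizing $\T_n$, since the latter also requires stabilizing $\Sigma$; but this parenthetical plays no role in the argument.
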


Note that, as mentioned above, only 1--stabilization results in the introduction of a Hopf pair to the surgery description.  It follows performing 2-- or 3--stabilizations on $\T_n$ will not change the conclusion of the above proposition.  In particular, \emph{any} choice of Heegaard surface $\Sigma$ for $E(L)$ is allowed.

Therefore, if Question \ref{stableln} has a negative answer, then not only are the trisections $\T_n$ non-standard splittings of $S^4$, but it would also follow that any trisection in an infinite sequence of 2-- and 3--stabilizations performed on $\T_n$ is also non-standard!  We consider this possibility to be both striking and probable.

%%%%%%%%%%%%%%%%%%%%%%%%%%%%%%%%%%%%%%%%%%%%%%%%%%%%%%%%%%%%%%
%%%%%%%%%%%%%%%%%%%%%%%%%%%%%%%%%%%%%%%%%%%%%%%%%%%%%%%%%%%%%%
\bibliographystyle{acm}
\bibliography{CSUTBiblio.bib}
%%%%%%%%%%%%%%%%%%%%%%%%%%%%%%%%%%%%%%%%%%%%%%%%%%%%%%%%%%%%%%
%%%%%%%%%%%%%%%%%%%%%%%%%%%%%%%%%%%%%%%%%%%%%%%%%%%%%%%%%%%%%%

\end{document}